\renewcommand{\Im}{\operatorname{Im}}
\renewcommand{\Re}{\operatorname{Re}}
\renewcommand{\Im}{\operatorname{Im}}
\renewcommand{\k}{\kappa}
\renewcommand{\(}{\left\(}
\renewcommand{\)}{\right\)}
\renewcommand{\[}{\left\[}
\renewcommand{\]}{\right\]}
\numberwithin{equation}{section}
 \theoremstyle{plain}
\newtheorem{theorem}{Theorem}[section]
\newtheorem{lemma}[theorem]{Lemma}
\newtheorem{remark}[]{Remark}
\newtheorem{corollary}[theorem]{Corollary}
\newtheorem{proposition}[theorem]{Proposition}
\def\proof{\@ifnextchar[{\@oproof}{\@nproof}}
\def\@oproof[#1][#2]{\trivlist\item[\hskip\labelsep\textit{#2 Proof of\
#1.}~]\ignorespaces}
\def\@nproof{\trivlist\item[\hskip\labelsep\textit{Proof.}~]\ignorespaces}
\begin{document}
\title[Rademacher-type exact formula for $r$-colored $\ell$-regular partitions]{Rademacher-type exact formula and higher order Tur\'{a}n inequalities for $r$-colored $\ell$-regular partitions}

\author{Archit Agarwal}
\address{Archit Agarwal\\ Department of Mathematics \\
Indian Institute of Technology Indore \\
Simrol,  Indore,  Madhya Pradesh 453552, India.} 
\email{archit.agrw@gmail.com,   phd2001241002@iiti.ac.in }

\author{Meghali Garg}
\address{Meghali Garg \\ Department of Mathematics \\
Indian Institute of Technology Indore \\
Simrol,  Indore,  Madhya Pradesh 453552, India.} 
\email{meghaligarg.2216@gmail.com,   phd2001241005@iiti.ac.in}

 \author{Bibekananda Maji}
\address{Bibekananda Maji\\ Department of Mathematics \\
Indian Institute of Technology Indore \\
Simrol,  Indore,  Madhya Pradesh 453552, India.} 
\email{bibek10iitb@gmail.com,  bmaji@iiti.ac.in}

\thanks{2010 \textit{Mathematics Subject Classification.} Primary 11P82,  11N37; Secondary 11F20.\\
\textit{Keywords and phrases.} Circle method, Rademacher-type exact formula,  $r$-colored partitions, $\ell$-regular partitions,  Higher order Tur\'{a}n inequalities. }
\maketitle
\begin{abstract}
In 1937, Rademacher refined the circle method of Hardy and Ramanujan to derive an exact convergent series for the partition function $p(n)$.  In 1942,  
Hua derived an exact formula for the distinct part partition function, and in 1971, Hagis generalized this result to the case of $\ell$-regular partitions. More recently, Iskander, Jain, and Talvola established a Rademacher-type exact formula for the $r$-colored partition function. 
In this paper, we employ the circle method to obtain a Rademacher-type exact formula for $r$-colored $\ell$-regular partitions for any $r \in \mathbb{N}$ and $\ell \geq 2$. As an application, we derive higher order Tur\'{a}n  inequalities for the $r$-colored $\ell$-regular partition function using a result of Griffin, Ono, Rolen, and Zagier. Furthermore, as additional consequences, we establish Rademacher-type exact formulas and higher order Tur\'{a}n inequalities for the $r$-colored distinct part partition function and for the sum of minimal excludants over ordinary partitions and overpartitions.
\end{abstract}

\section{Introduction}

In the theory of partitions,  determining an exact value for the partition function $p(n)$ for large $n$ has been considered as one of the challenging problems for quite some time until
Hardy and Ramanujan \cite{HR1918} made a breakthrough in 1918 by developing the circle method and obtaining an asymptotic formula for $p(n)$ which enabled them to find values of partition function for large $n$.  
Mainly,  they showed that,  as $n \rightarrow \infty$,  
$$
p(n) = \frac{e^{\pi \sqrt{\frac{2 \lambda_n}{3}} }}
{4\sqrt{3 }\lambda_n}
\left( 1 - \frac{1}{\pi \sqrt{\frac{2\lambda_n}{3}} } \right)
+ O\!\left( \frac{e^{\pi \sqrt{\frac{2 }{3}} \sqrt{n/2}}}{n} \right),$$
where $\lambda_n= n - 1/24$. 
In particular,  one can see that,  
\begin{align*}
 p(n) \thicksim \frac{1}{4n \sqrt{3}} e^{ \pi \sqrt{ \frac{2 n}{3}}} \quad \mathrm{as} \quad n \rightarrow \infty .
\end{align*}
The circle method has emerged as one of the most powerful and indispensable techniques in the field of analytic number theory.   This technique was further improved by Rademacher \cite{HRad1937, HRad1943},  who derived the following exact formula for $p(n)$: 
\begin{align}\label{Rademacher formula}
p(n) = 2\pi \left(\frac{1}{6\sqrt{\frac{2}{3}\left(n-\frac{1}{24}\right)}}\right)^{\frac{3}{2}} \sum_{k=1}^{\infty} \frac{A_{k}(n)}{k} I_{\frac{3}{2}}\left(\frac{\pi}{k}\sqrt{\frac{2}{3}\left(n-\frac{1}{24}\right)} \right),
\end{align}
where 
\begin{align*}
A_{k}(n) = \sum_{\substack{h ~\mathrm{mod}~k \\ \gcd(h,k)=1}} e^{i\pi s(h,k) -2\pi i n \frac{h}{k}},
\end{align*}
and $I_{\nu}$ denotes the modified Bessel function of the first kind and $s(h,k)$ denotes  the Dedekind sum defined as 
\begin{align}\label{dedekind_sum}
s(h,k):= \sum_{r=1}^{k-1}\frac{r}{k} \left( \frac{hr}{k} - \left\lfloor \frac{hr}{k} \right\rfloor - \frac{1}{2} \right).
\end{align}
A detailed discussion on Radmacher's proof of \eqref{Rademacher formula} can be found in  \cite{And1998, Apostal1990, HRad1973}.  
Over the years,  Hardy-Ramanujan-Rademacher circle method has been used extensively to derive Rademacher-type exact formula for various restricted partition functions.   For any $r \in \mathbb{N}$, the $r$-colored ordinary partition function $p^{(r)}(n)$ is a natural generalization of the partition function $p(n)$.  Numerous mathematicians have studied the arithmetic properties of $p^{(r)}(n)$,  see \cite{BCCDGS23, Dicks23}.  

Recently,  for $1 \leq r \leq 24$,  Pribitkin and Williams \cite{rcolour} obtained an exact formula for $r$-colored partitions using the duality between modular forms of weight $\frac{-r}{2}$ and $2 + \frac{r}{2}$. Motivated by their work, we aim to establish a Rademacher-type exact formula for the $r$-colored partition function for \emph{any} $r \in \mathbb{N}$. 
 Mainly, we prove the following result.  
%\begin{theorem}\label{r_colour_partitions}
For $ r \in \mathbb{N}$ and $n > \frac{r}{24}$,  we have 
\begin{align*}
p^{(r)}(n) = 2\pi \sum_{k=1}^{\infty} \frac{1}{k}  \sum_{m=0}^{\lfloor{\frac{r}{24}}\rfloor}  p^{(r)}(m) A (n) I_{1+\frac{r}{2}} \left(\frac{4\pi}{k} \sqrt{\left(\frac{r}{24} -m\right)\left(n-\frac{r}{24}\right)}\right) \left(\frac{\frac{r}{24} -m}{n-\frac{r}{24}}\right)^{\frac{1}{2}+\frac{r}{4}},
\end{align*}
where 
\begin{align*}
A(n) := A_{k,r,m}(n) := \sum_{\substack{h~\mathrm{mod}~ k\\ \gcd(h,k)=1}} e^{ir\pi s(h,k) +\frac{2\pi i}{k}(mh'-nh)} ,  
\end{align*}
with $hh' \equiv -1(\mathrm{mod}~k)$ and $s(h,k)$ is defined in \eqref{dedekind_sum}.
%\end{theorem}
However,  later we came to know that the above formula has already been obtained by Iskander,  Jain and Talvola \cite[Theorem 1.1]{IJT20} for \emph{any real} $r>0$.

In 1942,  Hua \cite{LKHUA} obtained a Rademacher-type exact formula for partitions into distinct parts.  Let $p_d(n)$ be the distinct part partition function.   Hua showed that 
\begin{align}\label{exact_formula_pd(n)}
p_d(n) = \frac{1}{\sqrt{2}}\sum_{\substack{k=1\\k~\mathrm{odd}}}^{\infty}E_{k}(n) \frac{\mathrm{d}}{\mathrm{d}n}J_0\left( \frac{i\pi}{k\sqrt{3}}\sqrt{n+\frac{1}{24}}  \right),
\end{align}
where 
\begin{align*}
E_{k}(n)= \sum_{h \bmod k} e^{\pi i (s(h, k)-s(2h, k)) - 2\pi i n\frac{h}{k}},
\end{align*} 
and $J_0$ is the Bessel function of  the first kind.  

In this paper,  we extend Hua's result by establishing a Rademacher-type exact formula for $r$-colored distinct part partition function,  which we denote by $p_d^{(r)}(n)$.  

Moreover, for any positive integer $\ell \geq 2$,  we denote $\ell$-regular partitions of $n$ as $b_{\ell}(n)$ that counts the number of partitions of $n$ where parts are not divisible by $\ell$.  Many mathematicians studied Ramanujan-type congruence properties of $b_{\ell}(n)$ for different values of $\ell$.   Curious readers can see \cite{OB16, CW14,  CG2013, DP09,  Penniston} and references therein.   A Rademacher-type exact formula for $\ell$-regular partitions was established by Hagis \cite{Hagis} in 1971.  One of the main aims of this manuscript is to generalize the result of Hagis by deriving an exact formula for $r$-colored $\ell$-regular partition function $b_\ell^{(r)}(n)$,  whose generating function is given by 
\begin{align}\label{generating_b_l^{(r)}}
\sum_{n=0}^{\infty} b_\ell^{(r)}(n)q^n = \frac{(q^\ell; q^\ell)_\infty^r}{(q; q)_\infty^r}=  \prod_{n=1}^\infty \left( \frac{1 - q^{\ell n}}{1- q^n} \right)^r,  
\end{align}
where $(A;B)_\infty := \prod_{n=0}^\infty ( 1 - A B^n)$ for $|B|<1$ and $A \in \mathbb{C}$.

\subsection{Higher order Tur\'{a}n  inequalities}
In recent years, considerable attention has been devoted to the study of Tur\'{a}n inequalities and their higher order generalizations for combinatorial and number-theoretic sequences. A sequence $\{t(n)\}$ of real numbers is said to be log-concave if it satisfies the classical Tur\'{a}n inequality:
\begin{align*}
t(n)^2 \geq t(n-1)t(n+1),\quad \text{for}~ \text{all} \quad n \geq 1. 
\end{align*}
The investigation of log-concavity and Turán inequalities is deeply intertwined with the theory of real entire functions in the Laguerre–Pólya class and with analytic aspects related to the Riemann Hypothesis \cite{CNV86, Dimitrov, GORZ19, szego}.
These properties also frequently arise in combinatorics, where many classical sequences such as the binomial coefficients, Stirling numbers, and Bessel numbers are known to exhibit log-concavity \cite{S89}. For the partition function $p(n)$, Nicolas \cite{Nicolas78} first proved that
\begin{align*}
 p(n)^2 > p(n-1)p(n+1),
\end{align*}
for all $n > 25$,  which was later established again by DeSalvo and Pak \cite{DeSalvo2015} using Lehmer’s refinement of the error term in  the formula for $p(n)$.  Beyond classical log-concavity, the notion of higher order Tur\'{a}n inequalities offer a broader framework for studying real-rootedness phenomena via Jensen polynomials. For a real sequence $\{t(n)\}$, the Jensen polynomial of degree $d$ and shift $n$ is defined by
\begin{align*}
J_t^{d,n}(X) := \sum_{i=0}^{d} \binom{d}{i} t(n+i)\, X^i.
\end{align*}
The sequence $\{t(n)\}$ is said to satisfy the degree $d$ Tur\'{a}n inequality at $n$ if the polynomial $J_t^{d,n-1}(X)$ is hyperbolic, i.e., all its roots are real.
The study of Jensen polynomials has proven remarkably powerful in understanding the asymptotic behavior and analytic structure of arithmetic functions. For instance, Chen, Jia, and Wang \cite{CJW2019} established the hyperbolicity of the cubic Jensen polynomial $J_p^{3,n-1}(X)$ associated to the partition function $p(n)$ for all $n \geq 94$ and further conjectured that for every integer $d \geq 1$, there exists an integer $N_p(d)$ such that $J_p^{d,n-1}(X)$ is hyperbolic for all $n \geq N_p(d)$.
This conjecture was subsequently proved by Griffin, Ono, Rolen, and Zagier \cite[Theorem 5]{GORZ19}.  They not only proved the result for the partition function but also established the hyperbolicity of Jensen polynomials associated with the Fourier coefficients of weakly holomorphic modular forms on the full modular group $SL_2(\mathbb{Z})$.
Their result revealed a striking connection between Jensen polynomials and the Hermite polynomials $H_d(X)$.
They proved that, under mild analytic assumptions on a positive sequence $\{t(n)\}$,  properly scaled Jensen polynomials converge to Hermite polynomials as $n \to \infty$. 
%Namely, they proved the following beautiful result.  
\begin{theorem}\label{theorem of Griffin ono rolen and zagier} \cite[Theorem 3 and 8]{GORZ19}
Let $\{t(n)\}, \{A(n)\}, \{\delta(n)\}$ be sequences of positive real numbers, with $\delta(n)$ tending to $0$.  
For integers $j \geq 0$,  $d \geq 1$, suppose that there are real numbers $g_3(n), g_4(n), \ldots, g_d(n)$, for which 
\begin{align*}
\log \left( \frac{t(n+j)}{t(n)} \right)
= A(n)j - \delta(n)^2 j^2 + \sum_{i=3}^{d} g_i(n) j^i + o(\delta(n)^d),
\quad \text{as } n \to \infty,
\end{align*}
with $g_i(n) = o(\delta(n)^i)$ for each $3 \leq i \leq d$.  
Then we have  
\begin{align*}
\lim_{n \to \infty} \left( \frac{\delta(n)^{-d}}{t(n)}
J_{t}^{d,n} \!\left( \frac{\delta(n)X - 1}{\exp(A(n))} \right) \right)
= H_d(X).
\end{align*}
\end{theorem}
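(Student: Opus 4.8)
The plan is to substitute the scaled argument into the Jensen polynomial, exploit the cancellation engineered by the shift $\exp(-A(n))$, and reduce the limit to an application of the Hermite-generating operator. Throughout write $\delta = \delta(n)$ and $A = A(n)$. Since $\binom{d}{i}$ is nonzero only for $0 \le i \le d$ and $i$ ranges over this finite set, the hypothesis with $j = i$ may be applied termwise with no uniformity concern. A direct computation gives
\begin{align*}
\frac{\delta^{-d}}{t(n)} J_t^{d,n}\!\left( \frac{\delta X - 1}{\exp(A)} \right)
= \delta^{-d} \sum_{i=0}^{d} \binom{d}{i}\, \frac{t(n+i)}{t(n)}\, e^{-iA}\, (\delta X - 1)^i
= \delta^{-d} \sum_{i=0}^{d} \binom{d}{i}\, c_i\, (\delta X - 1)^i,
\end{align*}
where $c_i = \exp\!\left( -\delta^2 i^2 + \sum_{k=3}^{d} g_k(n) i^k + o(\delta^d) \right)$. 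The essential feature is that the shift by $e^{-A}$ exactly annihilates the linear term $A i$ coming from the hypothesis, so that only the quadratic and higher contributions to $\log\bigl(t(n+i)/t(n)\bigr)$ remain.

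Next I would recast the binomial sum as the action of a differential operator. Setting $D := w\,\partial_w$ and $w := \delta X - 1$, the eigenrelation $D\, w^i = i\, w^i$ gives $c_i\, w^i = \exp(\psi(D))\, w^i$ with $\psi(D) := -\delta^2 D^2 + \sum_{k=3}^d g_k(n) D^k + o(\delta^d)$, whence
\begin{align*}
\sum_{i=0}^{d} \binom{d}{i}\, c_i\, (\delta X - 1)^i
= \Big[ \exp(\psi(D))\, (1+w)^d \Big]_{w = \delta X - 1} .
\end{align*}
Passing to the variable $u := 1 + w$, so that $(1+w)^d = u^d$, $D = (u-1)\partial_u$, and the evaluation point becomes $u = \delta X$, it remains to analyse $\delta^{-d}\,[\exp(\psi(D))\,u^d]_{u = \delta X}$ as $\delta \to 0$.

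The heart of the matter is a bookkeeping of powers of $\delta$. Since the evaluation point $u = \delta X$ tends to $0$, the operator $D = (u-1)\partial_u$ acts on $u^d$ to leading order as $-\partial_u$, and each replacement of $-\partial_u$ by the remaining part of $D$, or the selection of any lower-degree monomial, produces a strictly higher power of $\delta$ after multiplication by $\delta^{-d}$. Likewise, every term in the multinomial expansion of $\exp(\psi(D))$ containing a factor $g_k(n) D^k$ contributes a factor $g_k(n)/\delta^{k} = o(1)$, and the remainder $o(\delta^d)$ is negligible against the compensating $\delta^d$. Consequently only the pure quadratic part survives, and
\begin{align*}
\lim_{n\to\infty} \frac{\delta^{-d}}{t(n)} J_t^{d,n}\!\left( \frac{\delta X - 1}{\exp(A)} \right)
= \lim_{\delta \to 0} \delta^{-d}\Big[ \exp(-\delta^2 \partial_u^2)\, u^d \Big]_{u = \delta X}.
\end{align*}
Expanding $\exp(-\delta^2 \partial_u^2)\, u^d = \sum_{m} \frac{(-1)^m \delta^{2m}}{m!} \frac{d!}{(d-2m)!}\, u^{d-2m}$ and evaluating at $u = \delta X$ produces exactly $\delta^d \sum_m \frac{(-1)^m d!}{m!(d-2m)!} X^{d-2m} = \delta^d H_d(X)$, using the explicit formula $H_d(X) = e^{-\partial_X^2} X^d$ for the Hermite polynomials normalised by $\sum_{d\ge 0} H_d(X)\, t^d/d! = e^{-t^2 + Xt}$. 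Multiplying by $\delta^{-d}$ yields the claimed limit $H_d(X)$.

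I expect the main obstacle to be the rigorous justification of the power counting in the third step: one must check, uniformly over the finitely many monomials arising in $\exp(\psi(D))\,u^d$, that every contribution other than the pure $\bigl(-\delta^2\partial_u^2\bigr)^m$ at top degree is $o(1)$ after normalisation by $\delta^{-d}$, so that the polynomial coefficients converge one by one. The algebraic inputs---the cancellation of $A(n)$ and the operator identity reducing the binomial sum to $\exp(\psi(D))(1+w)^d$---are straightforward; it is the control of the error terms $g_k(n)$ and $o(\delta^d)$ against the precise power $\delta^d$ that demands care.
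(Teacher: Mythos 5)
The paper does not prove this statement: it is imported verbatim from Griffin, Ono, Rolen, and Zagier \cite[Theorems 3 and 8]{GORZ19}, so there is no internal proof to measure you against. On its own merits your argument is essentially the original one: the shift by $e^{-A(n)}$ cancels the linear term $A(n)i$ in the hypothesis, the exponential of what remains is expanded, and a power count in $\delta(n)$ shows that after normalising by $\delta(n)^{-d}$ only the pure $-\delta(n)^{2}i^{2}$ contributions survive, reassembling into $H_d(X)=e^{-\partial_X^2}X^d$; your operator dressing with $D=w\partial_w$ is a cosmetic repackaging of that computation. The power count is correct: a monomial $\delta^{2a}\prod_k g_k^{b_k}D^{2a+\sum_k kb_k}$ contributes $O\bigl(\prod_k(|g_k|\delta^{-k})^{b_k}\bigr)=o(1)$ after normalisation when some $b_k>0$ and the total degree is at most $d$, and is $o(1)$ for degree reasons otherwise. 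Two details need tightening rather than new ideas. First, the $o(\delta(n)^d)$ error in the hypothesis depends on $j$, so it cannot be absorbed into the scalar symbol $\psi(D)$; peel it off each term as a multiplicative $1+o(\delta^d)$ factor, which is harmless because the resulting additive error, summed over the $d+1$ values of $i$, is $o(\delta^d)$ without any cancellation. Second, $\exp(\psi(D))$ is an infinite series, so ``finitely many monomials'' is not literally right; truncate the exponential at an order $M$ with $2(M+1)>d$, using that the exponent is $O(\delta^2)$ on the finite eigenvalue set $\{0,\dots,d\}$ so that the tail is crudely $O(\delta^{2(M+1)})=o(\delta^d)$, and then apply your power count to the finitely many surviving monomials. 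With those routine repairs the proof is complete and matches the argument of \cite{GORZ19}.
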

Since Hermite polynomials have distinct real zeros, and real-rootedness is preserved under linear transformations, this implies that Jensen polynomials have distinct real roots as well and consequently, the higher order Tur\'{a}n inequalities hold for large $n$.  Over time,  many mathematicians have investigated log-concavity and higher order Tur\'{a}n inequalities for various partition functions,   interested readers can see \cite{AGM25,  ChenTalk2010,  CJW2019,   Dimitrov,  LW19,  OPR22,  Pandey24}.  In 2021,  Crig and Pun \cite{CP21} examined the higher order Tur\'{a}n inequalities associated with the $\ell$-regular partition function.  In 2024,  Dong and Ji \cite{DJ24} also studied the same for the partition function into distinct parts.
%and more recently, the authors \cite{AGM25} explored log-concavity and higher order Tur\'{a}n inequalities for the cubic overpartition function.} 
In the present paper, we utilize the above result of Griffin, Ono, Rolen and Zagier to obtain higher order Tur\'{a}n inequalities for  $r$-colored $\ell$-regular partitions.

 This paper is organized as follows. In Section~\ref{main result}, we present the main results of this paper. Section~\ref{Preliminaries} collects the necessary preliminaries required for the proofs, including a transformation formula for $f(q^\ell)$, where $q = e^{2\pi i\left(\frac{h}{k} + \frac{iz}{k^2}\right)}$, $\,\frac{h}{k} \in \mathbb{Q}$, and $\Re(z) > 0$, as well as Weil's bound for generalized Kloosterman sums.  Section~\ref{proof of main results} is devoted to the proofs of the results stated in Section~\ref{main result}. Finally, we provide a numerical verification of our results at the end.

\section{Main Results}\label{main result}
We divide this section in two folds.   First,  we present a Rademacher-type exact formula for the $r$-colored $\ell$-regular partition function along with its applications.  At the end,   we discuss higher order Tur\'{a}n inequalities for $r$-colored $\ell$-regular partition function and their corresponding applications.
%\subsection{Rademacher-type exact formula for  $r$-colored  $\ell$-regular partitions}

The first result gives a Rademacher-type exact formula for  $r$-colored $\ell$-regular partitions.
\begin{theorem} \label{l regular r colour } Let $r \geq 1,   \ell \geq 2$ be two positive integers and $b_\ell^{(r)}(n)$ be the number of  $r$-colored $\ell$-regular partitions of $n$ and $p^{(r)}(n)$ be the number $r$-colored ordinary partitions of $n$.  Let $a^{(r)}(n)$ be the coefficient of $q^n$ in the power series expansion of $(q)_\infty^r$. An exact formula for $b_\ell^{(r)}(n)$ is given by 
{\allowdisplaybreaks
\begin{align}
b_\ell^{(r)}(n) &=\sum_{\substack{Q \mid \ell \\ Q^2<\ell}} \left(\frac{Q}{\ell}\right)^{\frac{r}{2}} \sum_{\substack{k=1\\ (k,\ell) = Q}}^\infty \sideset{}{'}\sum_{m,s=0 }^{\left\lfloor{\frac{r}{24} \left( \frac{\ell}{Q^2}-1\right)} \right\rfloor} p^{(r)}(m) a^{(r)}(s)C(n)\nonumber  \\
&\times \frac{\mathrm{d}}{\mathrm{d}n}J_0\left( \frac{4i \pi}{k}\sqrt{\left(\frac{r}{24}\left(1-\frac{Q^2}{\ell}\right)- \left( m+ \frac{Q^2s}{\ell} \right) \right) \left( n+\frac{r}{24}(\ell-1)\right)}\right),  \label{exact formula for r-colored l-regular}
\end{align}}
where
\begin{align*}
C(n) := C_{k,r,\ell,  Q,m,s}(n) = \sum_{\substack{h~\mathrm{mod}~k \\ \gcd(h,k)=1}} e^{i\pi r (s(h,k) - s(\frac{\ell h}{Q},\frac{k}{Q})) + \frac{2\pi i}{k}( mh' + s Q h_Q -nh)},
\end{align*}
with  $hh' \equiv -1(\mathrm{mod}~k)$ and $\frac{\ell}{Q} hh_Q \equiv -1(\mathrm{mod}~\frac{k}{Q})$.  Here,  the $\sum^{'}$ means the sum is running over all those $m$ and $s$,  which satisfy $ m+\frac{Q^2s}{\ell}  < \frac{r}{24}\left(1-\frac{Q^2}{\ell}\right)$. 
\end{theorem}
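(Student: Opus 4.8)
The plan is to run the Hardy--Ramanujan--Rademacher circle method directly on the generating function
\begin{align*}
F(q) = \sum_{n=0}^\infty b_\ell^{(r)}(n)\, q^n = \frac{(q^\ell;q^\ell)_\infty^r}{(q;q)_\infty^r}.
\end{align*}
First I would write, by Cauchy's theorem,
\begin{align*}
b_\ell^{(r)}(n) = \frac{1}{2\pi i}\oint_{|q|=\rho} \frac{F(q)}{q^{n+1}}\,\mathrm{d}q, \qquad \rho = e^{-2\pi/N^2},
\end{align*}
and perform a Farey dissection of the circle $|q|=\rho$ into arcs, one around each fraction $h/k$ with $\gcd(h,k)=1$ and $1\le k\le N$. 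On the arc at $h/k$ I substitute $q = \exp\!\left(\frac{2\pi i}{k}(h+iz)\right)$ with $\Re(z)>0$, so that each arc becomes a segment in the $z$-plane and $q\to e^{2\pi i h/k}$ as $z\to 0^{+}$. Grouping the fractions according to $Q=\gcd(k,\ell)$ will produce the outer sum over $Q\mid\ell$.

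Next I would apply, on each arc, the transformation formula for $(q;q)_\infty$ and for $(q^\ell;q^\ell)_\infty$ recorded in Section~\ref{Preliminaries}. The key observation is that
\begin{align*}
q^\ell = \exp\!\left(\frac{2\pi i}{k/Q}\left(\frac{\ell h}{Q} + i\,\frac{\ell z}{Q}\right)\right), \qquad \gcd(\ell h/Q,\, k/Q)=1,
\end{align*}
so that the transformation of $(q^\ell;q^\ell)_\infty$ is governed by the denominator $k/Q$ and produces the Dedekind sum $s(\ell h/Q, k/Q)$ together with the inverse $h_Q$ defined by $\frac{\ell}{Q}hh_Q\equiv -1\pmod{k/Q}$. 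Combining the two transformations, the powers of $z$ arising from $(q;q)_\infty^{-r}$ and $(q^\ell;q^\ell)_\infty^{r}$ cancel — the net modular weight is $0$ — leaving the constant $(Q/\ell)^{r/2}$ (from the factor $\ell z/Q$ raised to $-r/2$) and the phase $e^{\pi i r(s(h,k)-s(\ell h/Q,\, k/Q))}$. Expanding the two transformed products as $1/(q';q')_\infty^r=\sum_{m\ge 0}p^{(r)}(m)q'^m$ and $(q_1';q_1')_\infty^r=\sum_{s\ge 0}a^{(r)}(s)q_1'^s$ in the dual nomes, the accompanying phases $e^{2\pi i mh'/k}$ and $e^{2\pi i sQh_Q/k}$, together with the factor $e^{-2\pi i nh/k}$ from $q^{-n}$ and the sum over $h\bmod k$, assemble precisely into the Kloosterman-type sum $C(n)$.

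I would then collect the $z$-dependent exponentials. A direct bookkeeping shows that the coefficient of $1/z$ equals $\frac{2\pi}{k}\big[\frac{r}{24}(1-Q^2/\ell)-(m+Q^2 s/\ell)\big]$ and the coefficient of $z$ equals $\frac{2\pi}{k}\big[n+\frac{r}{24}(\ell-1)\big]$. The principal part of each arc integral retains only the finitely many pairs $(m,s)$ for which the $1/z$-coefficient is positive, i.e. $m+Q^2 s/\ell<\frac{r}{24}(1-Q^2/\ell)$; this simultaneously forces $Q^2<\ell$ and yields the stated summation ranges together with the primed condition. Extending each arc to the Rademacher path and invoking a Bessel integral representation, the integral of the form $\frac{1}{k}\int \exp\!\big(\frac{2\pi D}{kz}+\frac{2\pi\Lambda z}{k}\big)\,\mathrm{d}z$, with $D=\frac{r}{24}(1-Q^2/\ell)-(m+Q^2 s/\ell)$ and $\Lambda=n+\frac{r}{24}(\ell-1)$, evaluates to the displayed $\frac{\mathrm{d}}{\mathrm{d}n}J_0\!\big(\frac{4i\pi}{k}\sqrt{D\Lambda}\big)$. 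Because the net power of $z$ is $0$, the relevant Bessel order is $0$, which is exactly why the $\frac{\mathrm{d}}{\mathrm{d}n}J_0$ shape of Hua and Hagis appears here rather than the $I_{3/2}$ of Rademacher's $p(n)$; as a sanity check, specializing $\ell=2,\ r=1$ gives $Q=1$, prefactor $1/\sqrt 2$, ranges $m=s=0$, and Bessel argument $\frac{i\pi}{k\sqrt 3}\sqrt{n+1/24}$, recovering Hua's formula \eqref{exact_formula_pd(n)}.

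Finally I would estimate the error, which consists of the non-principal terms in the two expansions, the arcs with $Q^2\ge \ell$ (which contribute no principal part), and the error in completing each arc to the full Rademacher contour; the standard Rademacher estimates show this tends to $0$ as $N\to\infty$. The hard part is twofold. First, one must track the two Dedekind-sum phases through both transformations and verify that they combine into exactly $C(n)$ with the correct inverses $h'$ and $h_Q$. Second, one must prove that the resulting infinite series in $k$ converges and that the limit may be taken term by term; this rests on a nontrivial bound for $C(n)$, which is a generalized Kloosterman sum twisted by Dedekind sums. Here I would invoke Weil's bound from Section~\ref{Preliminaries} to obtain $C(n)=O(k^{1/2+\e})$, which together with the decay of the Bessel terms guarantees absolute convergence and legitimizes the interchange.
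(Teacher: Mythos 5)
Your proposal is correct and follows essentially the same route as the paper: Cauchy's formula over the Rademacher/Farey path, grouping the fractions by $Q=\gcd(k,\ell)$, the eta-based transformation of $(q^\ell;q^\ell)_\infty$ at level $k/Q$ producing $s(\ell h/Q,k/Q)$ and $h_Q$, the weight-$0$ cancellation leaving $(Q/\ell)^{r/2}$, the truncation to pairs $(m,s)$ with positive $1/z$-coefficient (forcing $Q^2<\ell$), the Weil-type bound for the twisted Kloosterman sum $C(n)$ to control the errors and justify $N\to\infty$, and the Bessel integral representation yielding $\frac{\mathrm{d}}{\mathrm{d}n}J_0$. The paper carries out exactly this plan (splitting into $T_1$, $T_2$ and $\mathcal{K}_1$, $\mathcal{K}_2$), so no further comparison is needed.
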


As an immediate implication of the above result,  we have the following asymptotic result for $b_\ell^{(r)}(n)$.

\begin{corollary}\label{asymptotic of r-colored l regular partitions theorem}
As $n \rightarrow \infty$,  we have
\begin{align}\label{asymptotic of r-colored l regular partitions}
b_\ell^{(r)}(n) \sim \frac{1}{\sqrt{2}} \left( \frac{1}{\ell} \right)^{\frac{r}{2}} \left( \frac{r}{24} \left( 1-\frac{1}{\ell} \right) \right)^{\frac{1}{4}} \left(\frac{1}{n} \right) ^{\frac{3}{4}} e^{\left( 4 \pi \sqrt{\frac{nr}{24}\left(1- \frac{1}{\ell}\right)} \right)}. 
\end{align}
\end{corollary}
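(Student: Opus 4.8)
The plan is to read off the asymptotic directly from the exact formula in Theorem~\ref{l regular r colour } by isolating the unique dominant summand and showing that the entire remaining series contributes a strictly lower exponential order. Since the argument of each Bessel function is purely imaginary, I first rewrite $J_0(iw) = I_0(w)$, so that the growth of each summand is governed by the modified Bessel function $I_0$. Writing $\alpha_{Q,m,s} := \frac{r}{24}(1 - \frac{Q^2}{\ell}) - (m + \frac{Q^2 s}{\ell})$, the exponential growth rate of the term indexed by $(Q,k,m,s)$ is $\frac{4\pi}{k}\sqrt{\alpha_{Q,m,s}\, n}$. This rate is strictly maximized by the choices $Q = 1$, $k = 1$, and $m = s = 0$, all of which are admissible for $\ell \geq 2$ and $r \geq 1$; indeed $\alpha_{Q,m,s} = k^2 \alpha_{1,0,0}$ forces $k=1$ and $Q=m=s=0$. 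For this leading term one has $p^{(r)}(0) = a^{(r)}(0) = 1$, the Kloosterman-type sum collapses to $C(n) = 1$ (the only residue $h \bmod 1$ makes every Dedekind sum and every exponential trivial), and the prefactor equals $(1/\ell)^{r/2}$, with $\alpha_{1,0,0} = \frac{r}{24}(1 - \frac{1}{\ell})$.

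Next I would extract the asymptotic of the derivative in the leading term. Setting $\alpha := \frac{r}{24}(1-\frac{1}{\ell})$ and $w := 4\pi\sqrt{\alpha(n + \frac{r}{24}(\ell-1))}$, I use $I_0'(w) = I_1(w)$ together with $\frac{dw}{dn} = 2\pi\sqrt{\alpha/(n + \frac{r}{24}(\ell-1))}$. Feeding in the classical asymptotic $I_1(w) \sim e^{w}/\sqrt{2\pi w}$ as $w \to \infty$ and noting that the shift $\frac{r}{24}(\ell-1)$ affects only lower-order factors inside the exponential, a short computation gives $\frac{d}{dn} I_0(w) \sim \frac{\alpha^{1/4}}{\sqrt{2}\, n^{3/4}}\, e^{4\pi\sqrt{\alpha n}}$. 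Multiplying by the prefactor $(1/\ell)^{r/2}$ reproduces exactly the right-hand side of \eqref{asymptotic of r-colored l regular partitions}, since $\sqrt{\alpha n} = \sqrt{\frac{nr}{24}(1 - \frac{1}{\ell})}$.

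The step demanding the most care, and the main obstacle, is proving that the sum of all remaining terms is $o$ of the leading term. Here I would argue that every other admissible index strictly lowers the exponential rate: enlarging $k$ divides the exponent, enlarging $Q$ under the constraint $Q^2 < \ell$ strictly decreases $1 - \frac{Q^2}{\ell}$, and any positive $m + \frac{Q^2 s}{\ell}$ strictly decreases $\alpha_{Q,m,s}$. Although the prefactor $(Q/\ell)^{r/2}$ increases with $Q$, it is a bounded constant independent of $n$, so the comparison is decided entirely by the exponential rate. Thus there is a fixed $\beta < 4\pi\sqrt{\alpha}$ such that, after bounding $|C(n)| \leq \varphi(k) \leq k$ and the corresponding Bessel derivative, every non-leading term is $O(e^{\beta\sqrt{n}})$. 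Combined with the absolute convergence of the Rademacher series supplied by Theorem~\ref{l regular r colour }, the full tail is dominated by a convergent constant times $e^{\beta\sqrt{n}}$, which is $o(n^{-3/4} e^{4\pi\sqrt{\alpha n}})$. Dividing by the leading term and letting $n \to \infty$ then yields \eqref{asymptotic of r-colored l regular partitions}.
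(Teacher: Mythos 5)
Your proposal is correct and follows essentially the same route as the paper: isolate the single dominant summand $Q=1$, $k=1$, $m=s=0$, apply the classical asymptotic $I_1(w)\sim e^{w}/\sqrt{2\pi w}$ to the resulting Bessel derivative, and show every other index has a strictly smaller exponential rate so that the tail is $o$ of the main term (your treatment of the tail's uniform rate $\beta<4\pi\sqrt{\alpha}$ and its summability is in fact slightly more explicit than the paper's). The only blemish is the typo ``forces $k=1$ and $Q=m=s=0$'', which should read $Q=1$ and $m=s=0$.
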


Substituting $r=1$ in Theorem \ref{l regular r colour }, we recover a Rademacher-type exact formula for $\ell$-regular partitions obtained by Hagis \cite[Theorem 6]{Hagis}.  

\begin{corollary}\label{l regular partition}
Let $b_{\ell}(n)$ be the number of $\ell$-regular partitions of  $n$.  An exact formula for $b_{\ell}(n)$ is given by
\begin{align*}
b_{\ell}(n) &=\sum_{\substack{Q \mid \ell \\  Q^2<\ell}} \left(\frac{Q}{\ell}\right)^{\frac{1}{2}} \sum_{\substack{k=1\\ (k,\ell) = Q}}^\infty \sideset{}{'}\sum_{m,s=0 }^{\left\lfloor{\frac{1}{24} \left( \frac{\ell}{Q^2}-1\right)} \right\rfloor} p(m) a^{(1)}(s)D(n)\nonumber  \\
&\times \frac{\mathrm{d}}{\mathrm{d}n}J_0\left( \frac{4i \pi}{k}\sqrt{\left(\frac{1}{24}\left(1-\frac{Q^2}{\ell}\right)- \left( m+ \frac{Q^2s}{\ell} \right) \right) \left( n+\frac{1}{24}(\ell-1)\right)}\right),
\end{align*}
where
\begin{align*}
D(n) := C_{k, 1,\ell,  Q,m,s}(n) = \sum_{\substack{h~\mathrm{mod}~k \\ \gcd(h,k)=1}} e^{i\pi \left(s(h,k) - s\left(\frac{\ell h}{Q},\frac{k}{Q}\right)\right) + \frac{2\pi i}{k}( mh' + s Q h_Q -nh)},
\end{align*}
with  $hh' \equiv -1(\mathrm{mod}~k)$ and $\frac{\ell}{Q} hh_Q \equiv -1(\mathrm{mod}~\frac{k}{Q})$. Here, the $\sum^{'}$ means the sum is running over all those $m, s$ which satisfy $ m+\frac{Q^2s}{\ell}  < \frac{1}{24}\left(1-\frac{Q^2}{\ell}\right)$.  
%Also, $a(s)$ is the coefficients in the power series of $\frac{1}{f(q)}$.
\end{corollary}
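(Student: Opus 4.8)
The plan is to derive Corollary~\ref{l regular partition} as the direct specialization of Theorem~\ref{l regular r colour } to the case $r=1$, so that no fresh analytic input is needed; the entire task reduces to checking that each ingredient of the general formula \eqref{exact formula for r-colored l-regular} collapses to the asserted shape and that the left-hand side is genuinely Hagis's function.

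First I would confirm the identification of the left-hand side. Setting $r=1$ in the generating function \eqref{generating_b_l^{(r)}} gives
\begin{align*}
\sum_{n=0}^{\infty} b_\ell^{(1)}(n)\, q^n = \frac{(q^\ell;q^\ell)_\infty}{(q;q)_\infty} = \prod_{\substack{n \geq 1 \\ \ell \nmid n}} \frac{1}{1-q^n},
\end{align*}
where the last equality follows from cancelling the factors indexed by multiples of $\ell$. The right-hand product is exactly the generating function for partitions of $n$ into parts not divisible by $\ell$, whence $b_\ell^{(1)}(n) = b_\ell(n)$ as defined by Hagis.

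Next I would substitute $r=1$ termwise on the right-hand side of \eqref{exact formula for r-colored l-regular} and read off each specialization: the prefactor $(Q/\ell)^{r/2}$ becomes $(Q/\ell)^{1/2}$; the colored count $p^{(r)}(m)$ becomes the ordinary partition number $p(m)$; the coefficient $a^{(r)}(s)$ of $q^s$ in $(q)_\infty^r$ becomes $a^{(1)}(s)$; every instance of $r/24$ appearing in the summation index $\lfloor \frac{r}{24}(\frac{\ell}{Q^2}-1)\rfloor$, in the truncation condition $m + \frac{Q^2 s}{\ell} < \frac{r}{24}(1-\frac{Q^2}{\ell})$, and inside the argument of $J_0$ is replaced by $1/24$; and the exponential sum $C_{k,r,\ell,Q,m,s}(n)$ becomes
\begin{align*}
D(n) = \sum_{\substack{h~\mathrm{mod}~k \\ \gcd(h,k)=1}} e^{i\pi (s(h,k) - s(\frac{\ell h}{Q},\frac{k}{Q})) + \frac{2\pi i}{k}(mh' + sQh_Q - nh)},
\end{align*}
since the only dependence of $C$ on $r$ is through the factor $r$ multiplying the Dedekind-sum combination in the exponent. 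The accompanying congruence conditions $hh' \equiv -1\ (\mathrm{mod}~k)$ and $\frac{\ell}{Q}hh_Q \equiv -1\ (\mathrm{mod}~\frac{k}{Q})$, as well as the range $Q \mid \ell$ with $Q^2 < \ell$, are independent of $r$ and carry over verbatim.

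Assembling these specializations, the right-hand side of \eqref{exact formula for r-colored l-regular} at $r=1$ is precisely the expression claimed in the corollary, and by the identification $b_\ell^{(1)} = b_\ell$ the two sides match. I do not expect any real obstacle: the argument is pure bookkeeping, and the single point that warrants explicit justification is the generating-function identity above confirming that the $r=1$ case of $b_\ell^{(r)}$ reproduces the classical $\ell$-regular partition function studied by Hagis.
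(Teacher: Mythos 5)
Your proposal is correct and matches the paper's proof, which simply substitutes $r=1$ into Theorem \ref{l regular r colour }; your additional verification that $b_\ell^{(1)}(n)=b_\ell(n)$ via the generating function is sound bookkeeping that the paper leaves implicit.
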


Further,  letting $\ell = 2$ in  Theorem \ref{l regular r colour },  we obtain the following  Rademacher-type exact formula for $r$-colored distinct part partition function.
\begin{corollary}\label{r_colour_distint_partitions}
Let $p_d^{(r)}(n)$ be the number of $r$-colored distinct part partitions of $n,$ where parts are distinct but allowed to appear in $r$ different colors.  A Rademacher-type exact formula for $p_d^{(r)}(n)$ is given by
\begin{align*}
p_d^{(r)}(n) = \frac{1}{2^{\frac{r}{2}}} \sum_{\substack{k=1\\k ~\mathrm{odd}}}^{\infty} \sum_{\substack{m,s=0\\ 2m+s < \frac{r}{24}}}^{\lfloor{\frac{r}{24}}\rfloor} p^{(r)}(m) a^{(r)}(s) E(n) \frac{\mathrm{d}}{\mathrm{d}n}J_0\left(\frac{4\pi i}{k}\sqrt{\left(\frac{r}{48}-\frac{s}{2}-m\right)\left(\frac{r}{24} +n\right)} \right),
\end{align*}
where
\begin{align*}
E(n):= C_{k,r,2,   1,m,s}(n) = \sum_{\substack{h~\mathrm{mod}~k \\ \gcd(h,k)=1}} e^{i \pi r (s(h, k)-s(2h, k)) + \frac{2\pi i}{k}(s h_1 + mh' -nh)},
\end{align*}
with $hh' \equiv -1(\mathrm{mod}~k)$, $2hh_1 \equiv -1(\mathrm{mod}~k)$.  
% and $s(h,k)$ is as defined in \eqref{dedekind_sum}. 
%Also, $a^{(r)}(\ell)$ is the coefficients in the power series of $\frac{1}{F(q)}$.
\end{corollary}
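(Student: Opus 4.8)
The plan is to obtain Corollary \ref{r_colour_distint_partitions} as a direct specialization of Theorem \ref{l regular r colour } to the case $\ell = 2$, after first identifying the $2$-colored regular partition function with the $r$-colored distinct part partition function. Concretely, I would first show $b_2^{(r)}(n) = p_d^{(r)}(n)$ for every $n \geq 0$, and then substitute $\ell = 2$ into the exact formula of Theorem \ref{l regular r colour } and carry out the resulting bookkeeping simplifications.

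For the combinatorial identification, I would start from the generating function in \eqref{generating_b_l^{(r)}}, which for $\ell = 2$ reads $(q^2;q^2)_\infty^r / (q;q)_\infty^r$. Using the factorization $1 - q^{2n} = (1-q^n)(1+q^n)$, this collapses to $\prod_{n=1}^{\infty}(1+q^n)^r$, which is exactly the generating function for partitions into distinct parts in which each occurring part is assigned one of $r$ colors. Hence $b_2^{(r)}(n) = p_d^{(r)}(n)$, and the exact formula that Theorem \ref{l regular r colour } provides for $b_2^{(r)}(n)$ is simultaneously an exact formula for $p_d^{(r)}(n)$. The next key step is to observe that the outer summation in Theorem \ref{l regular r colour } runs over divisors $Q$ of $\ell = 2$ subject to $Q^2 < \ell = 2$; since $1^2 = 1 < 2$ while $2^2 = 4 \not< 2$, only $Q = 1$ survives, so the outer sum collapses to a single term.

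With $Q = 1$ and $\ell = 2$ in place, the remaining simplifications are purely mechanical. The prefactor $(Q/\ell)^{r/2}$ becomes $2^{-r/2}$; the condition $(k,\ell) = Q$ becomes $(k,2) = 1$, i.e.\ $k$ odd; the summation bound $\lfloor \tfrac{r}{24}(\ell/Q^2 - 1)\rfloor$ becomes $\lfloor r/24 \rfloor$; and the primed constraint $m + Q^2 s/\ell < \tfrac{r}{24}(1 - Q^2/\ell)$ reduces to $m + s/2 < r/48$, equivalently $2m + s < r/24$, matching the stated range. In the character sum $C_{k,r,\ell,Q,m,s}(n)$, the Dedekind-sum combination $s(h,k) - s(\tfrac{\ell h}{Q}, \tfrac{k}{Q})$ becomes $s(h,k) - s(2h,k)$, the auxiliary congruence $\tfrac{\ell}{Q} h h_Q \equiv -1 \pmod{k/Q}$ becomes $2 h h_1 \equiv -1 \pmod{k}$, and the remaining exponent is $\tfrac{2\pi i}{k}(s h_1 + m h' - n h)$, so that $C_{k,r,2,1,m,s}(n)$ is precisely $E(n)$. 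Likewise, the radicand inside $J_0$ reduces to $\left(\tfrac{r}{48} - \tfrac{s}{2} - m\right)\!\left(n + \tfrac{r}{24}\right)$, giving the stated Bessel argument. I do not expect any genuine analytic obstacle here: the entire argument is a specialization of the main theorem, and the only point demanding care is confirming that the divisor condition truly isolates $Q = 1$ when $\ell = 2$, so that no spurious terms are inadvertently retained.
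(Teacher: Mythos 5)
Your proposal is correct and follows exactly the paper's route: the paper's proof is the one-line specialization of Theorem \ref{l regular r colour } to $\ell = 2$, and your additional observations (the identification $b_2^{(r)}(n) = p_d^{(r)}(n)$ via $1-q^{2n}=(1-q^n)(1+q^n)$, and the fact that only $Q=1$ survives the condition $Q^2<\ell$) are just the implicit bookkeeping made explicit.
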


\begin{remark}
Substituting $r=1$ in Corollary \ref{r_colour_distint_partitions},  we can easily recover the exact formula \eqref{exact_formula_pd(n)} for the distinct part partition function $p_d(n)$.
\end{remark}

\begin{remark} Letting $r=2$ in Corollary \ref{r_colour_distint_partitions},  we get back a result of Grabner and Knopfmacher  \cite[Theorem 3]{GK2006},  namely,  a Rademacher-type exact formula for a restricted partition function which is   the sum of the smallest gaps in all unrestricted partitions.  This aforementioned function was later studied by Andrews and Newmann \cite{AN19} and named as $\sigma\textrm{mex}(n)$,  sum of the minimal excludants over all the partitions of $n$. Mainly,  Grabner and Knopfmacher showed that 
\begin{align*}
\sigma\textrm{mex}(n) = \frac{\pi}{2\sqrt{6}\sqrt{n + \tfrac{1}{12}}}
\sum_{k=1}^{\infty} \frac{F_{2k-1}(n)}{2k - 1}
I_1 \left( \frac{\pi}{2k - 1} \sqrt{\frac{2}{3}\left( n + \frac{1}{12}\right)} \right) ,
\end{align*}
where $I_1$ denotes the modified Bessel function of the first kind and 
\begin{align*}
F_k(n) :=  C_{k, 2, 2, 1, 0, 0}(n) = \sum_{\substack{h~\mathrm{mod}~k \\ \gcd(h,k)=1}} e^{2\pi i  \left(s(h,k) - s(2h,k) \right) -2n \pi i\frac{h}{k}}. 
\end{align*}
\end{remark}

Letting $r=3$ in Corollary \ref{r_colour_distint_partitions}, we obtain an exact formula for $\sigma\overline{mex}(n)$, which is the sum of minimal excludants over all the overpartitions of $n$,  studied by Aricheta and Donato \cite[Definition 1]{AD2024}.  
%The exact formula for $\overline{\sigma mex}(n)$ has not been studied yet.
\begin{corollary}\label{sigma_bar_mex_Exact_formula_corollary}
A Rademacher-type exact formula for $\sigma\overline{mex}(n)$ is 
\begin{align}\label{sigma_bar_mex_Exact_formula_equation}
\sigma\overline{mex}(n) = \frac{\pi}{4 \sqrt{2}\sqrt{n + \frac{1}{8}}} \sum_{k=1}^{\infty} \frac{G_{2k-1}(n)}{2k-1}I_1 \left( \frac{\pi }{2k-1} \sqrt{n + \frac{1}{8}} \right),
\end{align}
where
\begin{align*}
G_k(n) := C_{k, 3, 2, 1, 0, 0}(n)=  \sum_{\substack{ h \bmod k \\ \gcd(h,k)=1}} e^{3\pi i \left( s(h,k) - s(2h,k) \right)-2n\pi i  \frac{h}{k}}.
\end{align*}
\end{corollary}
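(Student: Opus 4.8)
The plan is to deduce Corollary~\ref{sigma_bar_mex_Exact_formula_corollary} from the case $r = 3$ of Corollary~\ref{r_colour_distint_partitions}. The bridge between the two is the generating function identity
\begin{align*}
\sum_{n \geq 0} \sigma\overline{mex}(n)\, q^n = (-q;q)_\infty^3,
\end{align*}
whose right-hand side is exactly the generating function $\prod_{n\geq 1}(1+q^n)^r$ of the $r$-colored distinct part partition function $p_d^{(r)}(n)$ at $r=3$ (equivalently the case $\ell=2$ of $b_\ell^{(r)}$). Thus the identity says precisely that $\sigma\overline{mex}(n) = p_d^{(3)}(n)$, and once it is established the corollary follows by substituting $r=3$ into Corollary~\ref{r_colour_distint_partitions} and rewriting the Bessel expression.

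To prove the identity I would proceed combinatorially. With $\overline{mex}(\pi)$ the minimal excludant of an overpartition $\pi$ in the sense of Aricheta and Donato \cite{AD2024}, one has $\overline{mex}(\pi) \geq j$ if and only if each of $1, 2, \ldots, j-1$ appears at least once as a non-overlined part of $\pi$. For fixed $j$ the overpartitions with this property have generating function $q^{\binom{j}{2}}\,\frac{(-q;q)_\infty}{(q;q)_\infty}$, the factor $q^{\binom{j}{2}} = q^{1+2+\cdots+(j-1)}$ recording the forced non-overlined parts and $\frac{(-q;q)_\infty}{(q;q)_\infty}$ being the full overpartition generating function. Summing $\overline{mex}(\pi) = \sum_{j \geq 1} \mathbf{1}[\overline{mex}(\pi) \geq j]$ over all overpartitions therefore gives
\begin{align*}
\sum_{n \geq 0} \sigma\overline{mex}(n)\, q^n = \frac{(-q;q)_\infty}{(q;q)_\infty} \sum_{j \geq 1} q^{\binom{j}{2}}.
\end{align*}
The plan is then to invoke the Gauss identity $\sum_{j \geq 0} q^{j(j+1)/2} = \frac{(q^2;q^2)_\infty^2}{(q;q)_\infty}$ together with Euler's identity $\frac{(q^2;q^2)_\infty}{(q;q)_\infty} = (-q;q)_\infty$; a one-line simplification then collapses the right-hand side to $(-q;q)_\infty^3$.

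For the analytic half I would set $r = 3$ in Corollary~\ref{r_colour_distint_partitions}. Since $\lfloor r/24 \rfloor = \lfloor 1/8 \rfloor = 0$, the double sum over $m,s$ reduces to the single term $m = s = 0$ (which trivially satisfies $2m + s < r/24$); with $p^{(3)}(0) = a^{(3)}(0) = 1$, $E(n) = G_k(n)$, and the argument $\frac{4\pi i}{k}\sqrt{\tfrac{3}{48}\left(n+\tfrac{3}{24}\right)} = \frac{\pi i}{k}\sqrt{n+\tfrac18}$, the formula becomes
\begin{align*}
p_d^{(3)}(n) = \frac{1}{2\sqrt{2}} \sum_{\substack{k=1\\ k~\mathrm{odd}}}^{\infty} G_k(n)\, \frac{\mathrm{d}}{\mathrm{d}n} J_0\!\left( \frac{\pi i}{k}\sqrt{n + \tfrac{1}{8}} \right).
\end{align*}
It then remains to rewrite the $J_0$-derivative as a modified Bessel function: using $J_0' = -J_1$, the chain rule, and $J_1(iz) = i\,I_1(z)$, the factors of $i$ cancel to give $\frac{\mathrm{d}}{\mathrm{d}n} J_0\!\big(\tfrac{\pi i}{k}\sqrt{n+\tfrac18}\big) = \frac{\pi}{2k\sqrt{n+1/8}}\, I_1\!\big(\tfrac{\pi}{k}\sqrt{n+\tfrac18}\big)$. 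Substituting this and reindexing the odd summation index as $k \mapsto 2k-1$ produces \eqref{sigma_bar_mex_Exact_formula_equation}. The only real content lies in the first step---fixing the combinatorial description of $\overline{mex}$ and hence the identity $\sigma\overline{mex}(n) = p_d^{(3)}(n)$; the remaining specialization and the bookkeeping of the factors of $i$ in the Bessel conversion are mechanical.
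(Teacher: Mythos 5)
Your proposal is correct and follows essentially the same route as the paper: specialize Corollary~\ref{r_colour_distint_partitions} to $r=3$ (so only the $m=s=0$ term survives) and convert $\frac{\mathrm{d}}{\mathrm{d}n}J_0$ into $I_1$ via $I_\nu(z)=i^{-\nu}J_\nu(iz)$ and $J_0'=-J_1$. The only difference is that you also supply a (correct) combinatorial derivation of the identity $\sum_{n\ge 0}\sigma\overline{mex}(n)q^n=(-q;q)_\infty^3$, which the paper simply imports from Aricheta and Donato \cite{AD2024} rather than proving.
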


The final result gives higher order Tur\'{a}n inequalities for the $r$-colored $\ell$-regular partition function $b_\ell^{(r)}(n)$.  
\begin{theorem}\label{hyperbolicity of Jensen polynomial of r-colored l regular partitions}
For any positive integer $d \geq 1$, $J_{b_\ell^{(r)}}^{d,n}(X)$ is hyperbolic for all but finitely many values of n.
\end{theorem}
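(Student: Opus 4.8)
The plan is to verify the hypotheses of Theorem~\ref{theorem of Griffin ono rolen and zagier} for the sequence $t(n) = b_\ell^{(r)}(n)$ and then invoke the real-rootedness of the Hermite polynomials. The essential input is a \emph{full} asymptotic expansion of $b_\ell^{(r)}(n)$, not merely the leading term of Corollary~\ref{asymptotic of r-colored l regular partitions theorem}. First I would return to the exact formula of Theorem~\ref{l regular r colour } and isolate its dominant contribution, which comes from $k=1$, $Q=1$ (legitimate since $1\mid\ell$ and $Q^2=1<\ell$ for every $\ell\geq 2$) and $m=s=0$; writing $N = n + \tfrac{r}{24}(\ell-1)$ for the natural shifted variable in the Bessel argument, the asymptotic expansion of the modified Bessel function together with Weil's bound controlling the remaining Kloosterman-type sums $C(n)$ yields
\begin{align*}
b_\ell^{(r)}(n) = C\, N^{-3/4} e^{\beta \sqrt{N}} \left( 1 + \sum_{i=1}^{M} \frac{b_i}{N^{i/2}} + O\!\left(N^{-(M+1)/2}\right)\right)
\end{align*}
for any fixed $M$, where $\beta = 4\pi\sqrt{\tfrac{r}{24}(1 - \tfrac1\ell)}$ and $C$ is the positive constant in \eqref{asymptotic of r-colored l regular partitions}. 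The terms with $k\geq 2$, with $Q>1$, or with $(m,s)\neq(0,0)$ are exponentially smaller (their Bessel arguments carry a factor $1/k$ or a strictly smaller constant), while the Bessel corrections contribute only at higher powers of $N^{-1/2}$, so each is harmless.

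Taking logarithms gives an expansion of the shape $\log b_\ell^{(r)}(n) = \beta\sqrt{N} - \tfrac34 \log N + \log C + \sum_{i\geq 1} e_i N^{-i/2}$, precisely the modular shape to which the framework of \cite{GORZ19} applies. The next step is to expand $\log\big(b_\ell^{(r)}(n+j)/b_\ell^{(r)}(n)\big)$ as a polynomial in $j$ by substituting $N \mapsto N + j$ and expanding in powers of $j/N$. Reading off coefficients, I would set
\begin{align*}
A(n) = \frac{\beta}{2\sqrt{N}} + O\!\left(\tfrac1N\right), \qquad \delta(n) = \sqrt{\tfrac{\beta}{8}}\, N^{-3/4} + O\!\left(N^{-5/4}\right),
\end{align*}
and let $g_i(n)$ be the coefficient of $j^i$ for $3\leq i \leq d$. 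Since the dominant contribution to $g_i(n)$ comes from $\beta(\sqrt{N+j}-\sqrt{N})$ and is of order $N^{1/2-i}$, while $\delta(n)^i \asymp N^{-3i/4}$, the comparison $\tfrac12 - i < -\tfrac{3i}{4} \iff i>2$ shows $g_i(n) = o(\delta(n)^i)$ for every $i\geq 3$; likewise $\delta(n)\to 0$. Retaining enough terms $M$ in the expansion above (depending on $d$) forces the truncation error to be $o(\delta(n)^d)$, so all hypotheses of Theorem~\ref{theorem of Griffin ono rolen and zagier} are met.

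Having verified the hypotheses, Theorem~\ref{theorem of Griffin ono rolen and zagier} gives
\begin{align*}
\lim_{n\to\infty} \frac{\delta(n)^{-d}}{b_\ell^{(r)}(n)}\, J_{b_\ell^{(r)}}^{d,n}\!\left(\frac{\delta(n) X - 1}{\exp(A(n))}\right) = H_d(X).
\end{align*}
Because $H_d(X)$ has $d$ distinct real roots and the affine substitution $X \mapsto (\delta(n)X - 1)/\exp(A(n))$ is real and invertible (as $\delta(n)\neq 0$), a standard continuity (Hurwitz) argument shows that for all sufficiently large $n$ the polynomial $J_{b_\ell^{(r)}}^{d,n}(X)$ has $d$ distinct real roots, i.e.\ is hyperbolic; this proves the claim for all but finitely many $n$.

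I expect the main obstacle to be the first step: producing the asymptotic expansion to arbitrary order with rigorous control of the secondary terms, both the $k\geq 2$ tail (handled via Weil's bound) and the Bessel-function corrections. This parallels the analysis of Craig and Pun \cite{CP21} for $\ell$-regular partitions, the color parameter $r$ altering only the constants $\beta$, $C$ and the coefficients $b_i$, not the qualitative $N^{-3/4}e^{\beta\sqrt N}$ shape; once this expansion is in hand, the remaining verification is routine power-series bookkeeping.
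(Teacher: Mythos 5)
Your proposal follows essentially the same route as the paper: verify the hypotheses of Theorem~\ref{theorem of Griffin ono rolen and zagier} by expanding $\log\big(b_\ell^{(r)}(n+j)/b_\ell^{(r)}(n)\big)$ in powers of $j$, read off $A(n)\asymp n^{-1/2}$, $\delta(n)\asymp n^{-3/4}$, $g_i(n)\asymp n^{1/2-i}$ so that $g_i(n)=o(\delta(n)^i)$ for $i\geq 3$, and conclude from the real-rootedness of the Hermite polynomials. The only substantive difference is that the paper works directly from the single leading-term asymptotic of Corollary~\ref{asymptotic of r-colored l regular partitions theorem} (writing the log-ratio with a $\sim$), whereas you insist on first extracting a full asymptotic expansion of $b_\ell^{(r)}(n)$ to arbitrary order from Theorem~\ref{l regular r colour }; your extra care is in fact what is needed to rigorously justify the $o(\delta(n)^d)$ error term in the hypothesis of Theorem~\ref{theorem of Griffin ono rolen and zagier}, so your version is, if anything, the more complete form of the same argument.
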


\section{Preliminaries}\label{Preliminaries}
In this section, we present several key results that are crucial for deriving the main results of the current paper.  We first state an important result related to Dedekind sums. 
\begin{lemma}\label{important_result_for_dedekind_sum}
If $h_1$ is an integer such that $hh_1 \equiv 1 (\mathrm{mod}~ k)$, then $s(h,k) = s(h_1,k) $.
\end{lemma}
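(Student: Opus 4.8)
The plan is to recast the Dedekind sum in its symmetric ``sawtooth'' form and then exploit the multiplicative change of variable $r \mapsto h_1 r$. Throughout, write $((x)) := x - \lfloor x \rfloor - \tfrac{1}{2}$ for $x \notin \mathbb{Z}$ and $((x)) := 0$ for $x \in \mathbb{Z}$, so that $((\cdot))$ is periodic of period $1$. First I would record that the hypothesis $hh_1 \equiv 1 \pmod{k}$ forces $h$ to be a unit modulo $k$, i.e. $\gcd(h,k)=1$, and symmetrically $\gcd(h_1,k)=1$; this nondegeneracy is exactly what the argument needs, so no separate coprimality assumption is required.

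Next I would convert the definition \eqref{dedekind_sum} into the symmetric form
\begin{align*}
s(h,k) = \sum_{r=1}^{k-1} \left(\left(\frac{r}{k}\right)\right)\left(\left(\frac{hr}{k}\right)\right).
\end{align*}
Since $\lfloor r/k \rfloor = 0$ for $1 \leq r \leq k-1$, we have $((r/k)) = \tfrac{r}{k} - \tfrac{1}{2}$, so the only discrepancy between this expression and \eqref{dedekind_sum} is the term $-\tfrac{1}{2}\sum_{r=1}^{k-1} ((hr/k))$. Because $\gcd(h,k)=1$, the residues $hr \bmod k$ run over $1,\dots,k-1$ as $r$ does, and $\sum_{j=1}^{k-1} ((j/k)) = \sum_{j=1}^{k-1}\bigl(\tfrac{j}{k}-\tfrac{1}{2}\bigr) = 0$; hence this extra term vanishes and the two expressions agree.

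With the symmetric form in hand, I would substitute $r = h_1 t$. As $t$ ranges over a complete residue system modulo $k$, so does $h_1 t$ since $\gcd(h_1,k)=1$, and because $((\cdot))$ has period $1$ the sum is unchanged if we let $r$ run over any complete residue system. This gives
\begin{align*}
s(h,k) = \sum_{t \bmod k} \left(\left(\frac{h_1 t}{k}\right)\right)\left(\left(\frac{h h_1 t}{k}\right)\right).
\end{align*}
Finally, $hh_1 \equiv 1 \pmod{k}$ yields $h h_1 t \equiv t \pmod{k}$, so $((hh_1 t/k)) = ((t/k))$ by periodicity, and therefore
\begin{align*}
s(h,k) = \sum_{t \bmod k}\left(\left(\frac{t}{k}\right)\right)\left(\left(\frac{h_1 t}{k}\right)\right) = s(h_1,k),
\end{align*}
which is the claim.

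The only genuinely delicate point is the passage to the symmetric form: one must verify that replacing the factor $\tfrac{r}{k}$ by $((\tfrac{r}{k}))$ leaves the value unchanged, which is precisely where coprimality (guaranteed by the existence of $h_1$) enters. Everything afterward is a clean reindexing via an invertible residue, so I expect no further obstacles.
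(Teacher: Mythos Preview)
Your proof is correct. The passage to the symmetric sawtooth form is handled carefully, the observation that $\sum_{j=1}^{k-1}((j/k))=0$ kills the cross term, and the reindexing $r\mapsto h_1 t$ together with periodicity finishes the argument cleanly.

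For comparison, the paper does not actually supply a proof of this lemma: it simply cites \cite[Theorem~3.6(b)]{Apostal1990}. Your argument is the standard one (and is essentially the proof given in Apostol), so you have filled in exactly what the paper chose to outsource. Nothing further is needed.
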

\begin{proof}
A proof of this result can be found in \cite[Theorem 3.6(b)]{Apostal1990}. 
\end{proof}
The next result gives Weil's bound for Kloosterman's sum. This bound plays a crucial role in proving the convergence of an infinite series.
\begin{lemma}\label{Weil_bound_for_kloosterman_sum}
Let $a, b, c$ be integers with $c>0$. Then we have
\begin{align*}
 \sideset{}{'}\sum_{x\bmod c} e^{2\pi i \frac{ax+b\bar{x}}{c}} = \mathcal{O}(c^{\frac{1}{2}}d(c)\gcd(a,b,c)^{\frac{1}{2}}),  
\end{align*}
where $\bar{x}$ denotes the multiplicative inverse of $x$ modulo $c$, i.e. $x \bar{x} \equiv 1\pmod c$ and $d(c)$ denotes the number of divisors of $c$.   Here, the symbol $\sum'$ indicates that the summation is restricted to a reduced residue system modulo $c$.  
\end{lemma}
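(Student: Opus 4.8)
The plan is to prove this classical estimate by the standard three-step reduction: factor the modulus via twisted multiplicativity to reduce to prime powers, evaluate prime powers $p^\alpha$ with $\alpha\ge 2$ by elementary stationary phase, and import Weil's deep bound for prime moduli. Write $S(a,b;c):=\sideset{}{'}\sum_{x\bmod c}e^{2\pi i(ax+b\bar x)/c}$. Since the assertion is an $\mathcal{O}$-bound, it suffices to prove $|S(a,b;c)|\le d(c)\,\gcd(a,b,c)^{1/2}\,c^{1/2}$ up to an absolute constant, and I would note that this is a well-known result that one may simply cite; the sketch below records the route.

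First I would establish the twisted multiplicativity of Kloosterman sums. Suppose $c=c_1c_2$ with $\gcd(c_1,c_2)=1$, let $\bar{c_2}$ be an inverse of $c_2$ modulo $c_1$ and $\bar{c_1}$ an inverse of $c_1$ modulo $c_2$. Using the Chinese Remainder Theorem to write $x\equiv c_2\bar{c_2}\,x_1+c_1\bar{c_1}\,x_2\pmod c$ as $x_1,x_2$ range over reduced residues modulo $c_1,c_2$, a direct substitution in the exponent splits it as a sum over the two blocks and gives
\begin{align*}
S(a,b;c_1c_2)=S(a\bar{c_2},\,b\bar{c_2};\,c_1)\,S(a\bar{c_1},\,b\bar{c_1};\,c_2).
\end{align*}
Because $\bar{c_2}$ is a unit modulo $c_1$, one has $\gcd(a\bar{c_2},b\bar{c_2},c_1)=\gcd(a,b,c_1)$, and likewise for $c_2$; together with the multiplicativity of $d(\cdot)$ and of $c\mapsto\gcd(a,b,c)$ across coprime factors, this shows that the claimed bound is multiplicative. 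Hence it is enough to prove it when $c=p^{\alpha}$ is a prime power.

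Next I would dispose of the prime powers with $\alpha\ge 2$ by the method of stationary phase. After extracting the gcd through the substitution that reduces $S(p^{j}a',p^{j}b';p^{\alpha})=p^{j}\,S(a',b';p^{\alpha-j})$ (so that, when $j<\alpha$, at least one of $a',b'$ is coprime to $p$), one writes $x=y+p^{\lceil\alpha/2\rceil}t$ and expands the exponent in $t$; the sum over $t$ vanishes unless the critical-point congruence $a-b\bar y^{2}\equiv 0$ holds modulo a suitable power of $p$. As this congruence has at most two solutions, the surviving sum collapses to at most two Gauss sums of modulus $p^{\alpha/2}$, which yields $|S(a,b;p^{\alpha})|\ll \gcd(a,b,p^{\alpha})^{1/2}\,p^{\alpha/2}$, consistent with $d(p^{\alpha})=\alpha+1\ge 2$.

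The genuinely hard step, and the only non-elementary ingredient, is the prime case $\alpha=1$. When $p\mid\gcd(a,b)$ the sum equals $p-1$ and is bounded by $\gcd(a,b,p)^{1/2}p^{1/2}=p$; when exactly one of $a,b$ is divisible by $p$ the sum is a Ramanujan sum equal to $-1$. The remaining case $p\nmid ab$ needs Weil's bound $|S(a,b;p)|\le 2\sqrt{p}$, which I would cite rather than reprove, since it is equivalent to the Riemann Hypothesis for the Artin--Schreier curve $y^{p}-y=ax+b/x$ over $\mathbb{F}_{p}$ and follows from Weil's theorem on the zeta functions of curves over finite fields. Assembling the three cases through the multiplicativity of the first step gives $|S(a,b;c)|\le d(c)\,\gcd(a,b,c)^{1/2}\,c^{1/2}$, establishing the stated estimate. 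The main obstacle is precisely this prime-modulus input: the Chinese Remainder bookkeeping and the Gauss-sum evaluation at higher prime powers are elementary, whereas the square-root cancellation over a prime field is a genuinely deep arithmetic-geometric fact.
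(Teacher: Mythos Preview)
Your sketch is correct and is the standard route to the Weil--Estermann bound for Kloosterman sums. The paper, however, does not give any argument at all: it simply cites \cite[Corollary~11.12]{IK2004} (Iwaniec--Kowalski) and moves on, so your proposal is strictly more detailed than what appears there, and is essentially the proof one finds in that reference.
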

\begin{proof}
We refer \cite[Corollary 11.12]{IK2004} for the proof of this result.  
\end{proof}
Hardy and Ramanujan \cite[Lemma 4.31]{HR1918} established a transformation formula for the generating function of $p(n)$,  which was crucial in obtaining  Rademacher's exact formula for $p(n)$.  The same transformation formula will also be useful to obtain an exact formula for $r$-colored $\ell$-regular partition function.  Thus,  for clarity of the reader,  here we state the transformation formula for $f(q)=\frac{1}{(q)_\infty}$.  For $\tfrac{h}{k} \in \mathbb{Q}, \ \Re(z) > 0$,   they proved that 
\begin{align}\label{final_transformation_formula_of_f}
f\left(e^{2\pi i \left(\frac{h}{k} + \frac{iz}{k^2} \right)}\right) = \sqrt{\frac{z}{k}} e^{i\pi s(h,k)} e^{\frac{\pi}{12k}\left(\frac{k}{z}-\frac{z}{k}\right)}f\left(e^{2\pi i \left(\frac{h'}{k}+ \frac{i}{z}\right)}\right),
\end{align}
with $hh' \equiv -1(\mathrm{mod}~k)$. We now derive an analogous transformation formula for $R\left(e^{2\pi i \left(\frac{h}{k} + \frac{iz}{k^2} \right)}\right)$,  where $R(q)$ is the generating function for  $r$-colored $\ell$-regular partition function $b_{\ell}^{(r)}(n)$, given by \eqref{generating_b_l^{(r)}}, 
\begin{align}\label{gen_b_l^{(r)}}
R(q) = (L(q))^r,  
\end{align}
where
$$
L(q)=  \frac{f(q)}{f(q^{\ell})}= \frac{(q^\ell; q^\ell)_\infty}{(q; q)_\infty},
$$
which is the generating function for $\ell$-regular partition function $b_\ell(n)$.  Before deriving the transformation formula for $R(q)$,  we first establish one for $L(q)$. To this end, we require a transformation formula for $f(q^{\ell})$.  Quite interestingly,  we will see that the transformation formula for $f(q^{\ell})$ is a piecewise function consisting of  $d(\ell)$ cases,  where $d(\ell)$ denotes the number of positive divisors of $\ell$.

It is well known that the Dedekind eta function $\eta(\tau)$ has a close connection with the partition generating function.  Mainly,   for $\tau \in \mathbb{H}$,  it is defined as 
\begin{align*}
\eta (\tau) = e^{ \frac{ i \pi \tau}{12}}  \prod_{m=1}^{\infty}(1-e^{2\pi i m \tau}) = \frac{e^{ \frac{i \pi \tau}{12}}}{f(e^{2\pi i \tau})}.  
\end{align*}
For a divisor $Q$ of $\ell$,   we replace $\tau$ by $\frac{\ell}{Q} \left( \frac{h}{k}+ \frac{iz}{k}\right)$ to see that 
\begin{align}\label{relation of f and eta for blr(n)}
f\left(e^{2\pi i \frac{\ell}{Q} \left(\frac{h}{k} + \frac{iz}{k} \right)}\right) = e^{\frac{i \pi \ell }{12 Q} \left(\frac{h}{k} + \frac{iz}{k} \right) } \eta^{-1} \left( \frac{\ell}{Q}  \left(\frac{h}{k} + \frac{iz}{k} \right) \right).
\end{align}
We now recall a transformation formula for $\eta(\tau)$.  For  $\tau' \in \mathbb{H}$,  
\begin{align}\label{transformation_formula_of_eta}
\eta\left(\frac{a\tau' +b}{c \tau' +d}\right) = \epsilon(a,b,c,d) \sqrt{\frac{c\tau' +d}{i}} \eta(\tau'), \quad \forall \begin{bmatrix}
a & b \\ c & d 
\end{bmatrix} \in SL_2(\mathbb{Z}),  
\end{align}
where $\epsilon (a,b,c,d)$ is defined as 
\begin{align*}
\epsilon (a,b,c,d) = \exp \Bigg( \frac{i \pi }{12}\Phi \begin{bmatrix}
a & b \\ c & d 
\end{bmatrix} \Bigg),
\end{align*}
with
\begin{align*}
\Phi \begin{bmatrix}
a & b \\ c & d 
\end{bmatrix} = \begin{cases} 
b + 3, ~~& \mathrm{for} ~~ c=0,  d=1, \\
-b - 3, ~~& \mathrm{for} ~~ c=0,  d=-1,   \\
 \frac{a+d}{c}-12~~ \mathrm{sign}(c)  s(d,|c|),~~ & \mathrm{for} ~~ c\neq 0,
\end{cases}
\end{align*} 
and $s(h,k)$ is the Dedekind sum defined in \eqref{dedekind_sum}.  To utilize the above transformation formula for $\eta(\tau)$, we require
\begin{align*}
\frac{a\tau' +b}{c \tau' +d}=\frac{\ell}{Q} \left(\frac{h}{k} + \frac{iz}{k} \right),
\end{align*}
which suggest that,  one should consider $\tau' = \frac{h_Q}{k} + \frac{i}{\ell kz}$ for some $h_Q \in \mathbb{Z}$ and 
\begin{align}\label{values of abcd for blr(n)}
\begin{bmatrix}
a & b \\ c & d
\end{bmatrix} = \begin{bmatrix}
\frac{\ell}{Q}h & -\frac{\frac{\ell}{Q} hh_Q+1}{k} \\ k & -h_Q
\end{bmatrix}.  
\end{align}
 In order for $\frac{\frac{\ell}{Q} hh_Q+1}{k}$ to be an integer, we impose the condition that $\frac{\ell}{Q} hh_Q \equiv -1(\mathrm{mod}~k)$. Note that $c=k\neq0$ implies that $\mathrm{sign}(k) = 1$. Therefore,  we have
\begin{align}\label{epsilon (a,b,c,d) for blr(n)}
\epsilon (a,b,c,d)=\exp \left(\frac{i\pi}{12}\left(\frac{\frac{\ell}{Q} h-h_Q}{k}-12s(-h_Q,k)\right) \right).
\end{align}
Now we make use of Lemma \ref{important_result_for_dedekind_sum} to see that $s(-h_Q,k) = s\left(\frac{\ell h}{Q}, k\right)$.  Hence,  utilizing \eqref{transformation_formula_of_eta}, \eqref{values of abcd for blr(n)} and \eqref{epsilon (a,b,c,d) for blr(n)} in \eqref{relation of f and eta for blr(n)},  we get
\begin{align*}
f\left(e^{2\pi i \frac{\ell}{Q} \left(\frac{h}{k} + \frac{iz}{k} \right)}\right) = \sqrt{\frac{\ell z}{Q}} e^{i\pi s\left(\frac{\ell h}{Q},  k\right) + \frac{\pi}{12k}\left(\frac{Q}{\ell z}-\frac{\ell z}{Q}\right)}f\left(e^{2\pi i \left(\frac{h_Q}{k}+ \frac{iQ}{\ell kz}\right)}\right).
\end{align*}
Now,  replace $k$ by $\frac{k}{Q}$ and then $z$ by $\frac{z}{k}$ to have
\begin{align}\label{final_transformation_formula_of_f_for_blr(n)}
f\left(e^{2\pi i \ell \left(\frac{h}{k} + \frac{iz}{k^2} \right)}\right) = \sqrt{\frac{\ell z}{k Q}} e^{i\pi s\left(\frac{\ell h}{Q},\frac{k}{Q}\right) + \frac{\pi}{12k}\left(\frac{Q^2k}{\ell z}-\frac{\ell z}{k}\right)}f\left(e^{2\pi i Q\left(\frac{h_Q}{k}+ \frac{iQ}{\ell z}\right)}\right),  
\end{align}
where $\frac{\ell}{Q} hh_Q \equiv -1(\mathrm{mod}~\frac{k}{Q})$.   
% This is because $k$ is being replaced by $k/Q$.  
Here,  we must pay attention to the fact that for each divisor $Q$ of $\ell$ with $\gcd(k, \ell) = Q$,  we will get a different transformation formula.
At this juncture,  dividing \eqref{final_transformation_formula_of_f} by \eqref{final_transformation_formula_of_f_for_blr(n)},  we obtain
\begin{align*}
L\left(e^{2\pi i \left(\frac{h}{k} + \frac{iz}{k^2} \right)}\right) = \sqrt{\frac{Q}{\ell}} e^{i\pi \left(s(h,k) - s\left(\frac{\ell h}{Q},\frac{k}{Q}\right)\right) +  \frac{\pi}{12z}\left(1-\frac{Q^2}{\ell} \right) +\frac{\pi z}{12k^2}(\ell-1) } \frac{f\left(e^{2\pi i \left(\frac{h'}{k}+ \frac{i}{ z}\right)}\right)}{f\left(e^{2\pi i Q\left(\frac{h_Q}{k}+ \frac{iQ}{\ell z}\right)}\right)}.
\end{align*}
Finally,  raising $r$-th power on both sides, we derive the following transformation formula for $R(q)$.   
\begin{lemma} \label{trans_for_R(q)}
Let $r,  k$ and $\ell \geq 2$ be positive integers and $\gcd(\ell,  k) = Q$.  Let $h_{Q}$ be an integer such that $\frac{\ell}{Q} hh_Q \equiv -1(\mathrm{mod}~\frac{k}{Q})$,  then we have the following transformation formula for $R(q)$: 
\begin{align*}
R\left(e^{2\pi i \left(\frac{h}{k} + \frac{iz}{k^2} \right)}\right) = \left(\frac{Q}{\ell}\right)^{\frac{r}{2}} e^{i\pi r \left(s(h,k) - s\left(\frac{\ell h}{Q},\frac{k}{Q}\right)\right)+\frac{\pi r}{12z}\left(1-\frac{Q^2}{\ell} \right) +\frac{\pi r z}{12k^2}(\ell-1) } \frac{F\left(e^{2\pi i \left(\frac{h'}{k}+ \frac{i}{ z}\right)}\right)}{F\left(e^{2\pi i Q\left(\frac{h_Q}{k}+ \frac{iQ}{\ell z}\right)}\right)},
\end{align*}  
where $hh' \equiv -1(\mathrm{mod}~k)$ and $F(q)=f(q)^r= (q)_\infty^{-r}$.
\end{lemma}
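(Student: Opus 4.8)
The plan is to obtain the transformation law for $R(q)$ by combining transformation laws for the two building blocks of $L(q)=f(q)/f(q^{\ell})$ and then raising the resulting identity to the $r$-th power. Recall that, by the definition of the generating function, $R(q)=(L(q))^{r}$ and $F(q)=f(q)^{r}$, so once a transformation formula for $L$ is in hand the passage to $R$ is purely formal. Since the behaviour of $f(q)=1/(q)_{\infty}$ under $\frac{h}{k}+\frac{iz}{k^{2}}\mapsto \frac{h'}{k}+\frac{i}{z}$ is already recorded in \eqref{final_transformation_formula_of_f}, the only genuine computation is a companion transformation formula for $f(q^{\ell})$, after which $L$ is handled by division.

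For the $f(q^{\ell})$ part I would pass through the Dedekind eta function, using $\eta(\tau)=e^{i\pi\tau/12}/f(e^{2\pi i\tau})$ together with the modular transformation \eqref{transformation_formula_of_eta}. Writing $Q=\gcd(\ell,k)$, the argument $\ell\left(\frac{h}{k}+\frac{iz}{k^{2}}\right)$ must be realised as a M\"obius image $\frac{a\tau'+b}{c\tau'+d}$ of a point $\tau'$ in the upper half plane under a matrix in $SL_{2}(\mathbb{Z})$. The natural choice is $\tau'=\frac{h_{Q}}{k}+\frac{i}{\ell k z}$ (before rescaling) together with a matrix having lower row $(k,-h_{Q})$ and top-left entry $\frac{\ell}{Q}h$; the determinant condition forces the remaining entry and simultaneously explains the congruence $\frac{\ell}{Q}hh_{Q}\equiv -1$. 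I would then evaluate the multiplier $\epsilon(a,b,c,d)$ from the $c\neq 0$ branch of the formula for $\Phi$, and crucially invoke Lemma \ref{important_result_for_dedekind_sum} to replace the Dedekind sum $s(-h_{Q},k)$ by $s(\ell h/Q,\,k)$: indeed the inverse of $-h_{Q}$ modulo $k$ is exactly $\frac{\ell}{Q}h$, by the very congruence just imposed. A final rescaling $k\mapsto k/Q$, $z\mapsto z/k$ converts the congruence to the stated modulus $k/Q$ and produces \eqref{final_transformation_formula_of_f_for_blr(n)}.

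Dividing \eqref{final_transformation_formula_of_f} by \eqref{final_transformation_formula_of_f_for_blr(n)} yields the transformation law for $L$: the square roots combine to $\sqrt{Q/\ell}$, the Dedekind sums combine to $s(h,k)-s(\ell h/Q,\,k/Q)$, and the two exponential factors $e^{\pi(k/z - z/k)/(12k)}$ and $e^{\pi(Q^{2}k/(\ell z)-\ell z/k)/(12k)}$ combine to $\exp\big(\frac{\pi}{12z}(1-\frac{Q^{2}}{\ell})+\frac{\pi z}{12k^{2}}(\ell-1)\big)$. Raising this identity to the $r$-th power then gives the lemma directly: $\sqrt{Q/\ell}$ becomes $(Q/\ell)^{r/2}$, every term in the exponent is multiplied by $r$, and the quotient of $f$-values becomes the quotient of $F$-values because $F=f^{r}$.

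I expect the main obstacle to lie entirely in the second step, namely verifying that the proposed matrix genuinely belongs to $SL_{2}(\mathbb{Z})$ and that the multiplier system is evaluated correctly. The integrality of the off-diagonal entry and the unimodularity both hinge on the congruence $\frac{\ell}{Q}hh_{Q}\equiv -1\pmod{k}$, which in turn requires that $\frac{\ell}{Q}h$ be invertible modulo $k$; this is where the hypothesis $\gcd(\ell,k)=Q$ (so that $\gcd(\ell/Q,k/Q)=1$ while $\gcd(h,k)=1$) is indispensable, and it is also the reason the underlying formula is genuinely a piecewise statement indexed by the divisors $Q$ of $\ell$. Once the multiplier and the Dedekind-sum inversion are pinned down, the remaining bookkeeping (collecting square roots, exponentials, and the rescaling) is routine, and the $r$-th power step introduces no new difficulty.
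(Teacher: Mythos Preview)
Your proposal is correct and follows essentially the same route as the paper: derive a transformation for $f(q^{\ell})$ via the $\eta$-transformation \eqref{transformation_formula_of_eta} with the matrix \eqref{values of abcd for blr(n)}, use Lemma~\ref{important_result_for_dedekind_sum} to rewrite $s(-h_{Q},k)$ as $s(\ell h/Q,k)$, rescale $k\mapsto k/Q$ and $z\mapsto z/k$ to reach \eqref{final_transformation_formula_of_f_for_blr(n)}, divide by \eqref{final_transformation_formula_of_f}, and take the $r$-th power. Your identification of the integrality/invertibility issue for the matrix entry as the only nontrivial point is also exactly where the paper's care is concentrated.
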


Now we state a result, from the theory of Bessel functions,  which will be useful to prove our main result.  
\begin{lemma} \cite[p. 181, Equation (1)]{GN1995}
For $c>0$ and $\Re(\nu) >0,$ an integral representation of the modified Bessel function of the first kind is given by:
 %{\bf Conditions on $\nu$ and $z$}
\begin{align}\label{bessel_I_integral}
I_{\nu}(z) = \frac{(z/2)^{\nu}}{2\pi i} \int_{c-\infty}^{c+i\infty} t^{-\nu -1} e^{t+\frac{z^2}{4t}} \mathrm{d}t.
\end{align}
\end{lemma}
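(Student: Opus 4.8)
The plan is to verify the representation by expanding the integrand and matching it term by term against the Maclaurin series of the modified Bessel function. Recall that for $\Re(\nu) > 0$ the modified Bessel function of the first kind is given by the everywhere-convergent series
\begin{equation*}
I_{\nu}(z) = \sum_{k=0}^{\infty} \frac{1}{k!\,\Gamma(\nu+k+1)} \left( \frac{z}{2} \right)^{\nu+2k}.
\end{equation*}
My goal is to reduce the contour integral on the right-hand side to this series. The contour is the vertical line $\Re(t) = c$ with $c > 0$, along which $t$ stays in the open right half-plane; consequently the principal branch of $t^{-\nu-1}$ is single-valued and holomorphic on and near the contour, so no branch-cut difficulties arise.

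First I would expand the factor $e^{z^2/(4t)}$ as its Maclaurin series in the variable $1/t$, namely $e^{z^2/(4t)} = \sum_{k \geq 0} \frac{1}{k!} (z^2/4)^k\, t^{-k}$, substitute it into the integral, and interchange summation and integration to obtain
\begin{equation*}
\frac{(z/2)^{\nu}}{2\pi i} \int_{c-i\infty}^{c+i\infty} t^{-\nu-1} e^{t} e^{z^2/(4t)}\,\mathrm{d}t = (z/2)^{\nu} \sum_{k=0}^{\infty} \frac{(z^2/4)^k}{k!}\cdot \frac{1}{2\pi i}\int_{c-i\infty}^{c+i\infty} t^{-(\nu+k+1)} e^{t}\,\mathrm{d}t.
\end{equation*}
The key analytic input is the Hankel-type representation of the reciprocal Gamma function on a vertical line: for $c > 0$ and $\Re(s) > 1$,
\begin{equation*}
\frac{1}{2\pi i}\int_{c-i\infty}^{c+i\infty} t^{-s} e^{t}\,\mathrm{d}t = \frac{1}{\Gamma(s)}.
\end{equation*}
Applying this with $s = \nu+k+1$, for which $\Re(s) = \Re(\nu)+k+1 > 1$, converts the inner integral into $1/\Gamma(\nu+k+1)$. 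Recombining $(z/2)^{\nu}(z^2/4)^k = (z/2)^{\nu+2k}$ then reproduces exactly the series for $I_{\nu}(z)$ displayed above, which finishes the identification.

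Two auxiliary facts must be supplied. For the reciprocal-Gamma formula I would either quote it directly or derive it by deforming the vertical line $\Re(t) = c$ into the classical Hankel loop contour $\mathcal{H}$ that wraps the negative real axis: since $t^{-s} e^{t}$ is holomorphic throughout the right half-plane and the slit plane, and since for $\Re(s) > 1$ the contributions of the connecting arcs at infinity vanish (on the left $\Re(t) \to -\infty$ forces $e^{t} \to 0$, while on the horizontal links $t^{-s}$ decays), Cauchy's theorem yields equality with $\frac{1}{2\pi i}\int_{\mathcal{H}} t^{-s} e^{t}\,\mathrm{d}t = 1/\Gamma(s)$. Convergence of the original integral is where the hypothesis $\Re(\nu) > 0$ is used: writing $t = c+iy$ one has $|t^{-\nu-1}| \asymp |y|^{-\Re(\nu)-1}$ as $|y| \to \infty$, while $e^{z^2/(4t)} \to 1$ and $|e^{t}| = e^{c}$, so the integrand is absolutely integrable precisely because $\Re(\nu)+1 > 1$.

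The main obstacle, and the only genuinely technical step, is justifying the interchange of summation and integration. I would secure this by Fubini--Tonelli, exhibiting an integrable dominating function for the double sum and integral. On the tail $|y| \geq 1$ one has $|t| \geq |y|$, so
\begin{equation*}
\sum_{k=0}^{\infty} \frac{|z^2/4|^k}{k!}\,|t|^{-\Re(\nu)-1-k} = |t|^{-\Re(\nu)-1} e^{|z|^2/(4|t|)} \leq |y|^{-\Re(\nu)-1} e^{|z|^2/4},
\end{equation*}
which is integrable in $y$ since $\Re(\nu) > 0$; after multiplying by the bounded factors $|e^{t}| = e^{c}$ and $e^{\Im(\nu)\arg t}$ this remains summable, and the compact piece $|y| \leq 1$ is controlled directly by holomorphy of the integrand away from the origin. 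This absolute bound legitimizes the term-by-term integration, after which the remaining algebra is routine. I expect this domination estimate, together with a careful statement of the reciprocal-Gamma formula on the vertical line, to be where essentially all the care is needed.
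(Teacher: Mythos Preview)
Your proof is correct: expanding $e^{z^2/(4t)}$, interchanging sum and integral via the absolute bound you exhibit, and then invoking the vertical-line form of Hankel's formula $\frac{1}{2\pi i}\int_{c-i\infty}^{c+i\infty} t^{-s}e^{t}\,\mathrm{d}t = 1/\Gamma(s)$ for $\Re(s)>1$ is the standard derivation, and your justifications of convergence and of the Fubini step are sound.

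The paper, however, does not prove this lemma at all; it merely quotes it from Watson's treatise (the citation \cite[p.~181, Equation~(1)]{GN1995}) as a known integral representation. So there is nothing to compare at the level of argument: you have supplied a self-contained proof where the paper is content to cite the literature. Your approach is exactly the one found in standard references such as Watson, so it is entirely in line with what the citation points to.
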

Prior to stating the next result, we introduce several notations that will be used in the subsequent discussion.  First,  we define 
\begin{align*}
\omega(h,k) = e^{\pi i  s(h,k)}.
\end{align*}
We then set
\begin{align}\label{Defn_omega_h_k_l_r}
\omega(h, k, \ell, r)= \left[\frac{\omega(h,k)}{\omega\left(\frac{\ell h}{Q}, \frac{k}{Q}\right)}\right]^r, 
\end{align}
where $Q=\gcd(k,\ell). $ Now one can write $k=QK$ and $\ell=QT$ with $\gcd(K,T)=1$. Then the above expression becomes
\begin{align}\label{omega in term of T and K}
\omega(h, k,  \ell,  r)= \left[\frac{\omega(h,k)}{\omega\left(Th, K\right)}\right]^r.
\end{align}
Our objective is  to estimate a sum involving $\omega(h, k,  \ell,  r)$. To this end, we begin with the following result associated to $\omega(h, k)$, the proof of which can be found in \cite{H70}.
\begin{proposition} \label{proposition 1}
If $k$ is odd, then
\begin{align} \label{omega for odd k}
\omega(h, k) = \left( \frac{h}{k} \right) i^{(k - 1)/2} e^{ \frac{2\pi i p(h - h')}{gk}}.
\end{align}
If $k$ is even, then
\begin{align}\label{omega for even k}
\omega(h, k) = \left( \frac{k}{h} \right) i^{b(k + 1)/2} e^{ \frac{2\pi i p(h - h')}{gk} }.
\end{align}
Here
\begin{align*}
g = \begin{cases}
\gcd(3, k) & \textrm{if}~k~\textrm{odd},  \\
8 \gcd(3,k) & \textrm{if}~k~\textrm{even}, 
\end{cases}
\end{align*}
and
$h'$ satisfies $hh' \equiv -1 \pmod{gk}$, and $p$ satisfies the relation $fp \equiv 1 \pmod{gk}$ where $f = \frac{24}{g}$. In \eqref{omega for even k}, $b \equiv h' \pmod{8}$.   The symbol $ \left( \frac{h}{k} \right)$ denotes the Jacobi symbol.
\end{proposition}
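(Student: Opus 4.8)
The plan is to obtain the closed forms \eqref{omega for odd k} and \eqref{omega for even k} for $\omega(h,k) = e^{\pi i s(h,k)}$ by comparing the Dedekind-eta multiplier already recorded in \eqref{transformation_formula_of_eta} against the multiplier of a weight-$\tfrac12$ unary theta series. The point is that the Jacobi symbol is genuinely quadratic arithmetic data and cannot be manufactured from the Dedekind-sum identities alone; it must enter through a quadratic object, and the natural such object whose automorphy factor carries a Gauss sum is the theta-null $\Theta(\tau) = \sum_{n\in\mathbb{Z}} e^{\pi i n^2 \tau}$. Thus the whole argument is a two-sided computation of \emph{the same} transformation multiplier: once through $\eta$ (which exposes the Dedekind sum) and once through $\Theta$ (which exposes a Gauss sum, hence a Jacobi symbol), with the discrepancy pinned down by elementary congruences.

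First I would encode $s(h,k)$ as an eta-multiplier. For fixed coprime $h$ and $k>0$, choose $\begin{bmatrix} a & b \\ c & d\end{bmatrix}\in SL_2(\mathbb{Z})$ with $a=h$, $c=k$, and $d$ in the residue class $d\equiv h^{-1}\pmod{k}$ forced by $ad-bc=1$. Then for $c=k>0$ the formula \eqref{transformation_formula_of_eta} gives $\epsilon(a,b,c,d) = \exp\left(\tfrac{i\pi}{12}\tfrac{h+d}{k}\right)\, e^{-i\pi s(d,k)}$, and Lemma \ref{important_result_for_dedekind_sum} applied to $dh\equiv 1\pmod k$ yields $s(d,k)=s(h,k)$, so that $\epsilon = \exp\!\left(\tfrac{i\pi(h+d)}{12k}\right)\,\omega(h,k)^{-1}$. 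Next I would compute the same multiplier through $\Theta$ (and, when $k$ is even, a suitably shifted unary theta series, since $k$ must be odd for the plain theta transformation to apply). Transforming $\Theta$ under the chosen matrix produces an automorphy multiplier of the shape $\varepsilon_\Theta = (\text{power of } i)\cdot k^{-1/2}\, G$, where $G=\sum_{j\bmod k} e^{2\pi i h j^2/k}$ is a quadratic Gauss sum. The classical evaluation of $G$ — equivalently, quadratic reciprocity — gives $G = \left(\tfrac{h}{k}\right)\varepsilon_k\sqrt{k}$ with $\varepsilon_k\in\{1,i\}$ determined by $k\bmod 4$; for even $k$ one instead lands on $\left(\tfrac{k}{h}\right)$ after reciprocity (with $h$ odd), which is exactly why the two displayed cases carry different symbols.

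I would then match the two computations. Since a fixed power of $\eta$ and the theta series differ only by an automorphy factor with an elementary (non-Jacobi) multiplier, equating $\varepsilon_\Theta$ with the corresponding power of $\epsilon$ isolates $\omega(h,k)$ as $\left(\tfrac{h}{k}\right)$ (respectively $\left(\tfrac{k}{h}\right)$) times $i$ to an explicit exponent; comparing the exponents of $i$ fixes the factors $i^{(k-1)/2}$ and $i^{b(k+1)/2}$, the dependence on $b\equiv h'\pmod 8$ in the even case emerging from tracking the Gauss-sum phase modulo $8$. The only leftover is the rational phase $\exp\!\left(\tfrac{i\pi(h+d)}{12k}\right)$, a $24$th-root-of-unity contribution. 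Because $d\equiv h^{-1}\equiv -h'\pmod k$ (with $hh'\equiv -1$), this phase is morally $\exp\!\left(\tfrac{2\pi i(h-h')}{24k}\right)$; writing $24 = fg$ with $g=\gcd(3,k)$ (respectively $8\gcd(3,k)$) and $f=24/g$, and using that $p$ inverts $f$ modulo $gk$ while $d$ may be lifted to the class $-h'\pmod{gk}$, I would rewrite $\tfrac{1}{24}$-type coefficients as $\tfrac{p}{g}$-type ones modulo $1$, collapsing the residual phase to $e^{2\pi i p(h-h')/(gk)}$. This final step is a pure congruence computation modulo $gk$ (and modulo $8$ when $k$ is even).

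The main obstacle is not the Jacobi symbol, which drops out cleanly from the Gauss sum, but the bookkeeping of roots of unity: one must reconcile the branch of $\sqrt{c\tau+d}$, the power of $i$ coming from the Gauss-sum evaluation, and the $24$th-root factor $\tfrac{a+d}{12k}$, and then verify that the recombination matches $e^{2\pi i p(h-h')/(gk)}$ \emph{exactly}, with the correct choice $g=\gcd(3,k)$ versus $g=8\gcd(3,k)$. The even-$k$ case is the most delicate, since there the reciprocity flip from $\left(\tfrac{h}{k}\right)$ to $\left(\tfrac{k}{h}\right)$ and the modulo-$8$ data $b\equiv h'\pmod 8$ intervene simultaneously, and the theta argument must be routed through a shifted unary theta series to accommodate the even denominator. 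Getting every phase to align on the nose is where the argument is genuinely error-prone and where the case split in the statement originates.
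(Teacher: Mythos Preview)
The paper does not actually prove Proposition~\ref{proposition 1}: its entire proof reads ``A proof of this result can be found in \cite{H70}'', deferring to Hagis's 1970 paper. So there is no argument in the paper to compare your sketch against; the result is imported as a black box.

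Your proposed route --- extracting the Jacobi symbol from a Gauss sum by computing the same automorphy factor once through $\eta$ and once through a unary theta series --- is indeed the classical strategy, and is in spirit what Hagis and before him Rademacher do. One point worth tightening in your sketch: the link between $\eta$ and $\Theta$ is not that they share a multiplier up to something elementary, but rather that a specific \emph{power} of $\eta$ (namely $\eta^3$, via Jacobi's identity $\eta(\tau)^3=\sum_{n\ge 0}(-1)^n(2n+1)q^{(2n+1)^2/8}$) \emph{is} a theta series of weight $3/2$. Matching multipliers therefore determines $\omega(h,k)^3$ directly, and one must then extract the cube root; the ambiguity by a cube root of unity is what the $\gcd(3,k)$ in the definition of $g$ is ultimately tracking. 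Your plan to ``equate $\varepsilon_\Theta$ with the corresponding power of $\epsilon$'' glosses over this cube-root extraction, which is exactly where the congruence bookkeeping modulo $3$ (and modulo $8$ in the even case) has to be done carefully. If you intend to write this out in full rather than cite \cite{H70}, that step is where the work lies.
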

Our goal is now to derive an analogue of Proposition \ref{proposition 1} for $\omega(h, k,  \ell,   r)$. To accomplish this,   we utilize basic properties of the Jacobi symbol and observe that the Proposition \ref{proposition 1} remains valid when parameters $h, h', k, g, f, p, b$ are replaced by $Th, h^*, K, G, F, P, B,$ respectively. The proof is divided into three cases.

\textbf{Case 1:} Suppose $k$ is odd, then $K$ is necessarily odd as well. From \eqref{omega in term of T and K} and \eqref{omega for odd k}, we obtain
\begin{align}\label{w-case1}
\omega(h, k,  \ell,  r) = \left[\frac{\left( \frac{h}{k} \right) i^{(k - 1)/2} e^{ \frac{2\pi i p(h - h')}{gk}}}{ \left( \frac{Th}{K} \right) i^{(K - 1)/2} e^{ \frac{2\pi i P(Th - h^*)}{GK} }}\right]^r.
\end{align}
Here 
\begin{align*}
&g=\gcd(3,k),&&~hh' \equiv -1 \pmod {gk},~&&fp \equiv 1 \pmod {gk},~&& f=24/g,  \\
&G=\gcd(3,K),&&~Thh^* \equiv -1 \pmod {GK},~&& FP \equiv 1 \pmod {GK},~&& F=24/G.
\end{align*}
Choosing $T'$ such that $TT' \equiv 1 \pmod{GK}$, we deduce that $h^* \equiv T'h' \pmod{GK}$.  Note that 
\begin{align*}
 &\quad Thh^* \equiv -1 \pmod {GK}\\
& \Rightarrow T'Thh^* \equiv -T' \pmod {GK}\\
& \Rightarrow hh^* \equiv -T' \pmod {GK} \hspace{2cm} ( \textrm{since}~~ TT' \equiv 1 \pmod{GK})\\
& \Rightarrow h'hh^* \equiv -T'h' \pmod {GK}\\
& \Rightarrow h^* \equiv T'h' \pmod{GK} \hspace{2cm} (\textrm{since}~~hh' \equiv -1 \pmod{gk}~ \mathrm{and}~ GK \mid gk).
\end{align*} 
If $g = JG$ with $J = 1$ or $3$, then $F = \frac{24}{G}= \frac{24J}{g}= Jf$.  One can see that  $P \equiv Ap \pmod{GK}$,  where $JA \equiv 1 \pmod{GK}$.  Moreover,  using properties of Jacobi symbol,  we have
\begin{align*}
\frac{\left( \frac{h}{k} \right)}{ \left( \frac{Th}{K} \right)} = \frac{\left( \frac{h}{QK} \right)}{ \left( \frac{Th}{K} \right)} = \frac{\left( \frac{h}{Q} \right) \left( \frac{h}{K} \right)}{\left( \frac{T}{K} \right) \left( \frac{h}{K} \right)} = \frac{\left( \frac{h}{Q} \right)}{ \left( \frac{T}{K} \right)}.
\end{align*}
Hence,  substituting the above expression in \eqref{w-case1} and upon simplification,  we obtain that
\begin{align*}
\omega(h, k,  \ell,   r) = \left[\frac{\left( \frac{h}{Q} \right)}{ \left( \frac{T}{K} \right)}\right]^r i^{\frac{r(k - K)}{2}} e^{\frac{2\pi i p r(Uh + Vh')}{gk} }, 
\end{align*}
where
\begin{align}\label{define U and V}
U = 1 - JA\ell, \quad V = JAT'Q - 1.
\end{align}
The term $\left[ \left( \frac{h}{Q} \right)/ \left( \frac{T}{K} \right) \right]^r$ has absolute value one and depends only on $k$ and $\ell$ when $h \equiv a \pmod{Q}$ with $\gcd(a, Q) = 1$.\\
\textbf{Case 2:} Suppose that both $k$ and $K$ are even. Then, from \eqref{omega in term of T and K} and \eqref{omega for even k}, we have
\begin{align*}
\omega(h, k,  \ell,  r) = \left[\frac{\left( \frac{k}{h} \right) i^{b(k + 1)/2} e^{ \frac{2\pi i p(h - h')}{gk} }}{\left( \frac{K}{Th} \right) i^{B(K + 1)/2} e^{ \frac{2\pi i P(Th - h^*)}{GK} }}\right]^r.
\end{align*}
The parameters satisfy
\begin{align*}
&g= 8 \gcd(3,k),&& hh' \equiv -1 (\bmod~ {gk}),&& fp \equiv 1 (\bmod~{gk}),&& f=\frac{24}{g},&& b \equiv h' (\bmod~{8}),\\
&G=8 \gcd(3,K),&& Thh^* \equiv -1 (\bmod~{GK}),&& FP \equiv 1 (\bmod~{GK}),&& F=\frac{24}{G},&& B \equiv h^* (\bmod ~{8}).
\end{align*}
Let us consider $T'$ such that $TT' \equiv 1 \pmod{GK}$, then one can check that  $h^* \equiv T'h' \pmod{GK}$ and hence $B \equiv h^* \equiv T'b \pmod{8}$ since $8 \mid G$. If $g = JG$ ($J = 1$ or $3$) then $F = Jf$ and $P \equiv Ap \pmod{GK}$ where $A$ is defined as in Case 1. Writing $Q = 2^\alpha Q^*$ with $\alpha \geq 0$ and $Q^*$ odd, we observe that 
 \begin{align*}
\frac{\left( \frac{k}{h} \right)}{ \left( \frac{K}{Th} \right)} = \frac{\left( \frac{QK}{h} \right)}{ \left( \frac{K}{Th} \right)} = \frac{\left( \frac{Q}{h} \right) \left( \frac{K}{h} \right)}{\left( \frac{K}{T} \right) \left( \frac{K}{h} \right)} = \frac{\left( \frac{Q}{h} \right)}{ \left( \frac{K}{T} \right)}=\frac{\left( \frac{2^\alpha}{h} \right) \left( \frac{Q^*}{h} \right)}{ \left( \frac{K}{T} \right)}=\frac{\left( \frac{2^\alpha}{h} \right) \left( \frac{h}{Q^*} \right)}{ \left( \frac{K}{T} \right)}(-1)^{\frac{(h-1)(Q^*-1)}{4}}, 
\end{align*}
where
\begin{align*}
\left(\frac{2^\alpha}{h}\right) = 
\begin{cases}
1, & \text{if } \alpha \text{ is even},\\[3pt]
(-1)^{(h^2 - 1)/8}, & \text{if } \alpha \text{ is odd.}
\end{cases}
\end{align*}
Note that $h$ is odd since $hh'\equiv -1 (\bmod~ 8)$. 
Therefore,  we finally have
\begin{align*}
\omega(h, k,  \ell,  r) = \left[\frac{\left( \frac{2^\alpha}{h} \right) \left( \frac{h}{Q^*} \right)}{ \left( \frac{K}{T} \right)} \right]^r(-1)^{ \frac{r(h-1)(Q^* - 1)}{4}} i^{ \frac{rb}{2} \{ (k+1) -T'(K+1) \} } e^{ \frac{ 2\pi i p r (Uh + Vh')}{gk} },
\end{align*}
with $U$ and $V$ are defined as in \eqref{define U and V}.
Note that the quotients of  Jacobi symbols  in the above expression has absolute value one and depends only on $k$ and $\ell$ under the constraints $h \equiv a \pmod Q$ where $\gcd(a, Q) = 1$, and $h \equiv d  \pmod 8$ with $d$ odd.\\
\textbf{Case 3:} The final case remains when $k$ is even and $K$ is odd.  Proceeding along the same line as we did in {\bf Case 1} and {\bf Case 2},  one can show that
\begin{align*}
\omega(h, k, \ell, r) = \left[\frac{\left( \frac{k}{h} \right) i^{b(k + 1)/2} e^{ \frac{2\pi i p(h - h')}{gk} }}{\left( \frac{Th}{K} \right) i^{(K - 1)/2} e^{\frac{2\pi i P(Th - h^*)}{GK} }}\right]^r,
\end{align*}
where
\begin{align*}
&g=8 \gcd(3,k),&&7 hh' \equiv -1 (\bmod~{gk}),&& fp \equiv 1 (\bmod~{gk}),&& f=24/g, b \equiv h' (\bmod~{8})\\
&G= \gcd(3,K),&& Thh^* \equiv -1 (\bmod~{GK}),&& FP \equiv 1 (\bmod {GK}),&& F=24/G.
\end{align*}
Letting $T'$ such that $TT' \equiv 1 \pmod{GK}$,  we can show that  $h^* \equiv T'h' \pmod{GK}$.  Write $g = JG$ ($J = 8$ or $24$),  then $F = Jf$ and $P \equiv Ap \pmod{GK}$ where $JA \equiv 1 \pmod{GK}$. Writing $Q = 2^\alpha Q^*$,  with $Q^*$ odd,  we see that
\begin{align*}
\frac{\left( \frac{k}{h} \right)}{ \left( \frac{Th}{K} \right)} = \frac{\left( \frac{QK}{h} \right)}{ \left( \frac{Th}{K} \right)} = \frac{\left( \frac{Q}{h} \right) \left( \frac{K}{h} \right)}{\left( \frac{T}{K} \right) \left( \frac{h}{K} \right)} = \frac{\left( \frac{Q}{h} \right)}{ \left( \frac{T}{K} \right)}(-1)^{(h-1)(K-1)/4} & =\frac{\left( \frac{2^\alpha}{h} \right) \left( \frac{Q^*}{h} \right)}{ \left( \frac{T}{K} \right)}(-1)^{(h-1)(K-1)/4} \nonumber \\
&=\frac{\left( \frac{2^\alpha}{h} \right) \left( \frac{h}{Q^*} \right)}{ \left( \frac{T}{K} \right)}(-1)^{\frac{(h-1)(K-Q^*)}{4}}.
\end{align*}
Using the above expression,  we conclude that  
\begin{align*}
\omega(h, k,  \ell,  r) = \left[\frac{\left( \frac{2^\alpha}{h} \right) \left( \frac{h}{Q^*} \right)}{ \left( \frac{T}{K} \right)}\right]^r (-1)^{ \frac{r(h-1)(K - Q^*)}{4}} i^{br(k+1)/2-r(K-1)/2}e^{ \frac{2\pi i p r (Uh + Vh')}{gk} },
\end{align*}
where $U$ and $V$ are defined as in \eqref{define U and V}.  
The expression $\left[\frac{\left( \frac{2^\alpha}{h} \right) \left( \frac{h}{Q^*} \right)}{ \left( \frac{T}{K} \right)}\right]^r$
has absolute value one and depends only on $k,  \ell$ and if $h \equiv a \pmod{Q}$ where $\gcd(a, Q) = 1$, and $h \equiv d \pmod{8}$ where $d$ is odd. \\
We now summarize the above three cases in the following proposition. 
\begin{proposition}\label{proposition 2}
Let $\omega(h, k, \ell, r)$ be the function defined in \eqref{Defn_omega_h_k_l_r}.  Considering the variables defined above,   we have 
$$\omega(h, k, \ell, r) = A(h, k, \ell,  r) e^{2\pi i q r \left( \frac{Uh + Vh'}{gk} \right) },$$ 
where $A(h, k, \ell,  r)$ is some complex number with $|A(h, k, \ell, r)| = 1$ and it  depends only on $k$ and $\ell$ if $h \equiv a \pmod{Q}$ where $\gcd(a, Q) = 1$ and also, if $k$ is even, $h \equiv d \pmod{8}$ where $d$ is odd. If $g = JG$ ($J = 1, 3, 8, 24$) and $JA \equiv 1 \pmod{GK}$, then $U = 1 - JA\ell$ and $V = JAT'Q - 1$.
\end{proposition}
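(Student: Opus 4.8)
The plan is to regard Proposition~\ref{proposition 2} as the consolidation of the three parity cases computed above, and to verify that in each case the Jacobi-symbol contribution and the exponential contribution separate cleanly into the asserted shape. Starting from the definition \eqref{Defn_omega_h_k_l_r}, rewritten as \eqref{omega in term of T and K} after setting $k=QK$ and $\ell=QT$ with $\gcd(K,T)=1$, I would apply Proposition~\ref{proposition 1} simultaneously to $\omega(h,k)$ and to $\omega(Th,K)$. Since $K\mid k$, the parity of $K$ cannot exceed that of $k$, so exactly three configurations arise: $k$ odd (which forces $K$ odd), both $k$ and $K$ even, and $k$ even with $K$ odd. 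These are precisely Cases~1--3, and they are exhaustive.

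In each case $\omega(h,k,\ell,r)$ factors into two pieces: a ratio of Jacobi symbols (together with powers of $i$ and, in the even cases, sign factors produced by quadratic reciprocity) and a ratio of exponentials. For the symbol piece I would invoke multiplicativity of the Jacobi symbol to cancel the common factor $\left(\frac{h}{K}\right)$, resp.\ $\left(\frac{K}{h}\right)$, reducing the quotient to an expression in $\left(\frac{h}{Q}\right)$ and $\left(\frac{T}{K}\right)$, and, when $k$ is even, in $\left(\frac{2^{\alpha}}{h}\right)$ and $\left(\frac{h}{Q^{*}}\right)$ after writing $Q=2^{\alpha}Q^{*}$ with $Q^{*}$ odd. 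The key observation is that this combined symbol factor, raised to the $r$-th power, has modulus one and is determined by $k$ and $\ell$ once the residue of $h$ modulo $Q$ (and, in the even case, modulo $8$) is fixed; this is the factor I would call $A(h,k,\ell,r)$.

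The heart of the matter is the exponential piece. Choosing $T'$ with $TT'\equiv 1 \pmod{GK}$, I would first establish the two congruences $h^{*}\equiv T'h' \pmod{GK}$ and $P\equiv Ap \pmod{GK}$, where $g=JG$ and $JA\equiv 1 \pmod{GK}$; the former follows by multiplying $Thh^{*}\equiv -1$ by $T'h'$ and using $hh'\equiv -1$, while the latter uses $F=Jf$ together with $fp\equiv 1$. Substituting these into $\frac{p(h-h')}{gk}-\frac{P(Th-h^{*})}{GK}$ and using $\frac{1}{GK}=\frac{JQ}{gk}$ collapses the exponent to $\frac{p}{gk}\left(h(1-JAQT)-h'(1-JAT'Q)\right)$. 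Since $QT=\ell$, the coefficients are exactly $U=1-JA\ell$ and $V=JAT'Q-1$ as in \eqref{define U and V}, yielding the claimed factor $e^{2\pi i\,pr(Uh+Vh')/(gk)}$.

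I expect the main obstacle to lie in the two even cases rather than in this coefficient bookkeeping. There the reciprocity steps introduce the sign factors $(-1)^{(h-1)(Q^{*}-1)/4}$ and $(-1)^{(h-1)(K-Q^{*})/4}$ and the power-of-two symbol $\left(\frac{2^{\alpha}}{h}\right)$, while the phases $i^{b(k+1)/2}$ and $i^{B(K+1)/2}$ must be combined through the congruence $B\equiv T'b \pmod{8}$, valid since $8\mid G$. The delicate point is to check that all of these auxiliary phases remain of modulus one and depend only on $k$ and $\ell$ under the constraints $h\equiv a \pmod{Q}$ with $\gcd(a,Q)=1$ and $h\equiv d \pmod{8}$ with $d$ odd, so that they can be absorbed into $A(h,k,\ell,r)$ without disturbing the exponential term. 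Once this is confirmed in Cases~2 and~3, the three cases assemble into the single uniform statement of Proposition~\ref{proposition 2}.
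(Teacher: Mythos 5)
Your proposal is correct and follows essentially the same route as the paper: the same three parity cases via Proposition~\ref{proposition 1}, the same congruences $h^{*}\equiv T'h'\pmod{GK}$ and $P\equiv Ap\pmod{GK}$ to collapse the exponent using $gk=JQ\cdot GK$, and the same absorption of the Jacobi-symbol and reciprocity factors into $A(h,k,\ell,r)$. (Note that the $q$ in the proposition's exponent is a typo for $p$, consistent with your computation and with the paper's three cases.)
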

We are now prepared to state a bound for a Kloosterman-type sum that will be employed in the subsequent analysis.
\begin{theorem}\label{bounds for generalized kloosterman sum}
Let $\gcd(k, \ell) = Q$, and suppose $h \equiv a \pmod{Q}$, $\gcd(a, Q) = 1$. Assume further that $hh' \equiv -1 \pmod{k}$ and that $s_1 \leq h' \pmod{k} < s_2 $ for integers satisfying $0 \leq s_1 < s_2 \leq k$. Let $t \leq n$; and $M$ is a fixed integer, then the exponential sum
\begin{align*}
X:= W_{\ell,  r,  M}(n,  k)   = \sideset{}{'}\sum_{h \bmod{k}} \omega(h, k, \ell,  r) e^{-\frac{2\pi i (hn - h'M)}{k}}
\end{align*}
admits the estimate
\begin{align*}
X:=W_{\ell,  r,  M}(n,  k)   = \mathcal{O}\left(n^{1/2}k^{1/2 + \epsilon} \right), 
\end{align*}
where the implied constant in the $\mathcal{O}$-term depends only on $\ell$. Here, the symbol $\sum'$ indicates that the summation is restricted to a reduced residue system modulo $k$, possibly subject to additional stated conditions.
\end{theorem}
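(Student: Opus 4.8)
The plan is to linearize the weight $\omega(h,k,\ell,r)$ in the summation variable by means of Proposition \ref{proposition 2}, recasting $X$ as a bounded combination of classical Kloosterman sums, and then to invoke Weil's bound from Lemma \ref{Weil_bound_for_kloosterman_sum}. By Proposition \ref{proposition 2}, one may write $\omega(h,k,\ell,r) = A(h,k,\ell,r)\, e^{2\pi i pr(Uh+Vh')/(gk)}$, where $A$ has modulus one and, crucially, is constant on each residue class of $h$ modulo $Q$ (and modulo $8$ when $k$ is even). Substituting this into the definition of $X$ and merging the exponentials, the summand becomes $A(h,k,\ell,r)$ times $e^{2\pi i(\alpha h + \beta h')/(gk)}$, where $\alpha = prU - gn$ and $\beta = prV + gM$ gather all the linear contributions; here $h'$ may be taken as the inverse modulo $gk$, which reduces consistently with the interval restriction imposed on $h' \bmod k$.

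First, I would strip off the arithmetic factor $A$. Since it depends only on $h \bmod Q$ and, when $k$ is even, on $h \bmod 8$, I would split the summation into the $O_\ell(1)$ admissible residue classes; on each class $A$ is a unimodular constant and may be pulled outside, contributing a factor depending only on $\ell$. This reduces matters to estimating, for each fixed class, the sum of $e^{2\pi i(\alpha h + \beta h')/(gk)}$ over $h$ restricted to a fixed progression, coprime to $k$, and subject to $s_1 \le h' \bmod k < s_2$.

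Next, I would complete these incomplete sums. Expanding the indicator of the interval for $h'$ (and, if needed, the progression condition on $h$) into additive characters modulo $k$ turns the restriction into complete sums, while the attendant geometric sums contribute only an extra factor $O(\log k) = O(k^\epsilon)$. After completion, and upon writing $h' \equiv -\bar h$ with $h\bar h \equiv 1$, each inner sum has exactly the shape $\sideset{}{'}\sum_{x \bmod gk} e^{2\pi i(ax + b\bar x)/(gk)}$, so Lemma \ref{Weil_bound_for_kloosterman_sum} applies and yields $O\bigl((gk)^{1/2} d(gk)\,\gcd(\alpha,\beta',gk)^{1/2}\bigr)$, where $\beta'$ absorbs the completion parameter. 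Since $g = \gcd(3,k)$ or $8\gcd(3,k)$ is at most $24$, we have $(gk)^{1/2} d(gk) = O(k^{1/2+\epsilon})$; and because $\gcd(\alpha,\beta',gk)$ divides $\alpha = prU - gn$, an integer of size $O(n+k) = O(n)$ in the range $k \le n$, we obtain $\gcd(\alpha,\beta',gk)^{1/2} = O(n^{1/2})$. Assembling the contributions of the finitely many classes gives $X = O(n^{1/2} k^{1/2+\epsilon})$ with implied constant depending only on $\ell$.

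The hard part will be the bookkeeping around the two distinct moduli: the summation runs over $h$ modulo $k$, whereas the representation in Proposition \ref{proposition 2} and the Kloosterman structure both live modulo $gk$. One must check that after completion the sum genuinely reassembles into a complete Kloosterman sum modulo $gk$, and that the side conditions (the class of $h$ modulo $Q$ and modulo $8$, together with the interval for $h'$) are simultaneously and consistently encoded. A secondary subtlety is the uniform control of $\gcd(\alpha,\beta',gk)^{1/2} = O(n^{1/2})$: this rests on $\alpha = prU - gn$ being nonzero, which holds in the relevant range because $prU = O_\ell(k)$ while $gn$ grows, so the degenerate case $\alpha = 0$ cannot occur for large $n$ in the circle-method range $k \le n$.
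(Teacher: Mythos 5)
Your proposal is correct and follows essentially the same route as the paper's proof (which itself is an outline of Hagis's argument): linearize $\omega(h,k,\ell,r)$ via Proposition \ref{proposition 2}, pass from modulus $k$ to $gk$, split into the finitely many residue classes modulo $Q$ (and modulo $8$ for even $k$) on which the unimodular factor is constant, expand the interval condition on $h'$ in a finite Fourier series costing $\mathcal{O}(k^\epsilon)$, and apply Weil's bound from Lemma \ref{Weil_bound_for_kloosterman_sum}. The only cosmetic difference is your discussion of the case $\alpha=0$, which is unnecessary since $\gcd(a,b,c)\leq c = gk \leq 24k$ already gives the required $\mathcal{O}(n^{1/2})$ in the range $k\leq n$.
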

\begin{proof} The proof of this result follows the same line of reasoning as that given by Hagis \cite[p.~378]{Hagis}. However, for the sake of completeness and clarity, we provide a brief outline below.  Using a property of the Dedekind sum,  one can see that  the function $\omega(h, k, \ell,  r)$ has period $k$ when viewed as a function of $h$. If we change the sum over $\bmod~k$ in $X$ to $ \bmod~gk$ and select $h'$ so that $hh' \equiv -1 (\bmod~gk)$, then employing Proposition \ref{proposition 2},  we obtain
 $$
 X= g^{-1} \sideset{}{'}\sum_{h \bmod gk} A(h, k, \ell,  r)e^{ \frac{2\pi i f(h)}{gk} },
 $$
  where $f(h)=(qrU-gn)h+(qrV+gM)h'.$
If $k$ is even,  we split $X$ into four parts, $X_1$, $X_3$, $X_5$, $X_7$, so that in $X_d$ we have $h \equiv d \pmod{8}$ as well as $h \equiv a \pmod{Q}$. Therefore,  $X = X_1 + X_3 + X_5 + X_7$,  where 
\begin{align}\label{define X_d}
X_d = A_d    \sideset{}{'}\sum_{h \bmod gk}  e^{\frac{2\pi i f(h)}{gk} }
\end{align}
with $|A_d| = g^{-1} \leq 1$.
If $k$ is odd,  then \eqref{define X_d} holds if we identify $X$ with $X_d$ and ignore the restriction $h \equiv d \pmod{8}$.
For two fixed integers $s_1$ and $s_2$,  let us define a function $\beta(s)$ as follows: 
\begin{align*}
\beta(s) =\begin{cases}
1, & \textrm{if}~ s_1 \leq s~(\bmod~{k}) < s_2,  \\
0, & \textrm{otherwise}.  
\end{cases}
\end{align*}
Note that $\beta(s)$ is a periodic function with period $k$. Thus,  $\beta(s)$ can be written as finite Fourier series,   
$$
\beta(s) = \sum_{j = 0}^{k - 1} \beta_j e^{\frac{2\pi i sj}{k} },
\quad \textrm{where} \quad
 \beta_j = \frac{1}{k} \sum_{s = 0}^{k - 1} \beta(s) e^{\frac{-2\pi i sj}{k} }.
 $$ 
One can prove that 
%$\sum_{j=0}^{k-1} \beta = \mathcal{O}(\log k)$, 
 $\sum_{j=0}^{k-1} \beta_j = \mathcal{O}(k^\varepsilon)$ for any $\varepsilon > 0$.
We can now drop the restriction $s_1 \leq h' \pmod{k} < s_2 $ and write
\begin{align*}
X_d &= A_d \sideset{}{'}\sum_{h \bmod gk} \beta(h') e^{\frac{2\pi i f(h)}{gk} }\\
&= A_d \sum_{j=0}^{k-1} \beta_j  \sideset{}{'}\sum_{h \bmod gk} e^{\frac{2\pi i \left((qrU-gn)h+(qrV+gM+gj)h'\right)}{gk}}.
\end{align*}
If $k$ is odd, then $\sum'$ represents a Kloosterman sum. 
When $k$ is even, we write $Q = 2^{\alpha} Q^{*}$, where $\alpha \geq 0$ and $Q^{*}$ is odd. It is straightforward to verify that if $\alpha \leq 1$, the two congruence conditions
\begin{center}
(I) $h \equiv a \pmod{Q} \quad$ and $\quad$ (II) $h \equiv d \pmod{8}$ 
\end{center} 
are jointly equivalent to a single condition of the form 
$$
h \equiv a^{*} \pmod{8Q^{*}}.
$$

If $\alpha = 2$ and $a \not\equiv d \pmod{4}$, then the sum $\sum'$ is empty. However, if $a \equiv d \pmod{4}$, conditions (I) and (II) again combine into a single congruence
$$
h \equiv a^{*} \pmod{8Q^{*}}.
$$

For $\alpha \geq 3$, if $a \not\equiv d \pmod{8}$, the sum $\sum'$ is empty; 
whereas if $a \equiv d \pmod{8}$, conditions (I) and (II) reduce to 
$$
h \equiv a \pmod{Q}.
$$
Hence, in all cases, either $\sum'$ is empty or it is a Kloosterman sum. 
Finally,  applying Weil’s bound  for Kloosterman sums i.e.,  Lemma \ref{Weil_bound_for_kloosterman_sum}, we  obtain
\begin{align*}
X=\mathcal{O}(n^{1/2} k^{1/2+\epsilon}).
\end{align*}
\end{proof}

We are now ready to prove our main results.

\section{proof of main results}\label{proof of main results}

\begin{proof}[Theorem \ref{l regular r colour }][]
 Let $b_\ell^{(r)}(n)$ be the number of  $r$-colored $\ell$-regular partitions of $n$.  From the generating function  \eqref{gen_b_l^{(r)}} for $b_\ell^{(r)}(n)$ and use of Cauchy's integral formula gives 
\begin{align*}
b_\ell^{(r)}(n) = \frac{1}{2\pi i} \int_{C} \frac{R(q)} {q^{n+1}}  \mathrm{d}q,
\end{align*}
where $R(q)= \frac{F(q)}{F(q^\ell)}$ and $F(q)=  f(q)^r$ and $C:|q| = r' < 1$. Substitute $q = e^{2\pi i \tau}$ with $\Im(\tau) >0$.
Then,  we have 
\begin{align}\label{start_integral_regular_partition}
b_\ell^{(r)}(n) = \int_{\tau_0}^{\tau_0 +1} R(e^{2\pi i \tau}) e^{-2\pi i n\tau} \mathrm{d}\tau,
\end{align}
where $\tau_0 \in \mathbb{H}$ and any path from $\tau_0$ to $\tau_0 +1$ is allowed.   
 The path we choose is same as Rademacher's  path based on Farey dissection,  interested reader can see \cite{HRad1943} for more details.  
 
 We construct Ford circles belonging to Farey sequence of order $N$.  For the Ford circle $C(h,k)$,  we will choose arc $\gamma_{h,k}$ in such a way that it connects the tangency points $\alpha_1$ and $\alpha_2$ and it does not touches the real axis.  Here we denote them as  $\frac{h}{k} + C'_{h,k}$ and $\frac{h}{k}+C''_{h,k}$,  where
 \begin{align*}
C'_{h,k}:=  - \frac{k_1}{k(k^2 + k_1^2)}+ i  \frac{1}{k^2 + k_1^2}, \quad \textrm{and} \quad C''_{h,k}:= \frac{k_2}{k(k^2 + k_2^2)} + i \frac{1}{k^2 + k_2^2}.
\end{align*} 

Thus,  considering the path of integration as the union of arcs $\gamma_{h,k}$,   the integral  \eqref{start_integral_regular_partition} becomes
\begin{align*}
b_\ell^{(r)}(n) = \sum_{\substack{0\leq h<k \leq N \\\gcd(h,k)=1}} \int_{\gamma_{h,k}} R(e^{2\pi i \tau}) e^{-2\pi i n\tau} \mathrm{d}\tau,
\end{align*}
where $\frac{h}{k}$ is a fraction from Farey sequence of order $N$.  Now,  we substitute $\tau = \frac{h}{k}+\zeta$ to see that 
\begin{align*}
 b_\ell^{(r)}(n) = \sum_{\substack{0\leq h<k \leq N \\\gcd(h,k)=1}} \int_{\zeta'_{h,k}}^{\zeta''_{h,k}} R\left(e^{2\pi i \left(\frac{h}{k}+\zeta \right)}\right) e^{-2\pi i n\left(\frac{h}{k}+\zeta\right)} \mathrm{d}\zeta.
\end{align*}
We further substitute $\zeta = \frac{iz}{k^2}$.  Under these substitutions,  the circle $C(h,k) : \left|\tau - \left(\frac{h}{k}+\frac{1}{2k^2}\right) \right| = \frac{1}{2k^2}$ becomes $\left|z-\frac{1}{2}\right| = \frac{1}{2}$.  Since $z = \frac{k^2}{i}\zeta$ and $z$ varies from $z'_{h,k}$ to $z''_{h,k}$ with
\begin{align*}
z'_{h,k} =  \frac{k^2}{k^2 + k_1^2} + \frac{ikk_1}{k^2 + k_1^2},
\end{align*}
and
\begin{align*}
z''_{h,k} =  \frac{k^2}{k^2 + k_2^2} - \frac{ikk_2}{k^2 + k_2^2}.
\end{align*}
Thus,  the integral becomes
\begin{align*}
b_\ell^{(r)}(n) = \sum_{\substack{0\leq h<k \leq N \\\gcd(h,k)=1}} \frac{i}{k^2} e^{-2\pi i n \frac{h}{k}} \int_{z'_{h,k}}^{z''_{h,k}} R\left(e^{2\pi i \left(\frac{h}{k} + \frac{iz}{k^2} \right)}\right) e^{2 \pi n \frac{z}{k^2}} \mathrm{d}z.
\end{align*}
  Now, we employ the transformation formula for $ R\left(e^{2\pi i \left(\frac{h}{k} + \frac{iz}{k^2} \right)}\right)$ i.e.  Lemma \ref{trans_for_R(q)} in the above integral to see that 
 \begin{align*}
b_\ell^{(r)}(n) &=\sum_{Q \mid \ell} \left(\frac{Q}{\ell}\right)^{\frac{r}{2}} \sum_{\substack{0\leq h<k \leq N \\\gcd(h,k)=1 \\ \gcd(k, \ell) =Q}} \frac{i}{k^2} e^{i\pi r \left(s(h,k) - s\left(\frac{\ell h}{Q},\frac{k}{Q}\right)\right) -2 n \pi i \frac{ h}{k}}  \nonumber \\
& \times \int_{z'_{h,k}}^{z''_{h,k}}  e^{\left(\frac{\pi r}{12z}\left(1-\frac{Q^2}{\ell} \right) +\frac{\pi r z}{12k^2}(\ell-1) +2 \pi n \frac{z}{k^2} \right)}
 \frac{F\left(e^{2\pi i \left(\frac{h'}{k}+ \frac{i}{ z}\right)}\right)}{F\left(e^{2\pi i Q\left(\frac{h_Q}{k}+ \frac{iQ}{\ell z}\right)}\right)}  \mathrm{d}z.
 \end{align*}
Next,  we make $\Re(z)$ sufficiently small so that $\Re\left(\frac{1}{z} \right)$ becomes large. 
This necessitates splitting the series into two parts depending whether $\Bigg| e^{\frac{\pi r}{12z}\left(1-\frac{Q^2}{\ell} \right)} \Bigg| \rightarrow 0$ or not. 
This behaviour is determined by whether $1-\frac{Q^2}{\ell}$ is negative or not. Thus, we write 
\begin{align}\label{divide blr(n) in two sum}
b_\ell^{(r)}(n) = T_1 + T_2,
\end{align}
where $T_1$ is the sum with $Q^2 < \ell $ and $T_2$ is with $ Q^2 \geq \ell $.   First,  we shall evaluate $T_2$.  We know $F(q)= (q)_\infty^{-r}$.  Hence,  for $\Re(z) \rightarrow 0$,  one can see that 
\begin{align*}
F\left(e^{2\pi i \left(\frac{h'}{k}+ \frac{i}{z}\right)}\right) = \sum_{m=0}^{\infty} p^{(r)}(m) e^{2m\pi i \left(\frac{h'}{k}+\frac{i}{z} \right)} = 1+ \mathcal{O}\left( e^{-2\pi \Re(1/z)} \right) = \mathcal{O}(1).  
\end{align*}
Similarly,  one can check that 
$$
\frac{1}{F\left(e^{2\pi i Q \left(\frac{h_Q}{k}+ \frac{iQ}{\ell z}\right)}\right)} = \mathcal{O}(1).
$$
From Theorem \ref{bounds for generalized kloosterman sum},  we have
\begin{align*}
\sum_{\substack{0\leq h<k \leq N \\ \gcd(h,k)=1 \\ \gcd(k, \ell)=Q}}  e^{i\pi r \left(s(h,k) - s\left(\frac{\ell h}{Q},\frac{k}{Q}\right)\right)-2n\pi i \frac{h}{k}} = \mathcal{O}\left(n^{\frac{1}{2}} k^{\frac{1}{2}+\epsilon} \right).
\end{align*}
Utilizing the above bounds,  we get
{\allowdisplaybreaks
\begin{align*}
T_2 &= \sum_{\substack{Q \mid \ell \\  Q^2 \geq \ell }} \left(\frac{Q}{\ell}\right)^{\frac{r}{2}} \sum_{\substack{0\leq h<k \leq N \\ \gcd(h,k)=1 \\ \gcd(k, \ell)=Q}} \frac{i}{k^2} e^{i\pi r \left(s(h,k) - s\left(\frac{\ell h}{Q},\frac{k}{Q}\right)\right)} e^{-2n\pi i \frac{h}{k}} \\ 
& \times\int_{z'_{h,k}}^{z''_{h,k}}  e^{ \frac{\pi r}{12z}\left(1-\frac{Q^2}{\ell} \right) + \frac{\pi r z}{12k^2}(\ell-1) +2 \pi n \frac{z}{k^2} } 
\frac{ F\left( e^{2\pi i \left(\frac{h'}{k}+ \frac{i}{ z}\right)}\right)}{F\left(e^{2\pi i Q\left(\frac{h_Q}{k}+ \frac{iQ}{\ell z}\right)}\right) }  \mathrm{d}z. \\
&= \mathcal{O} \left( \sum_{k=1}^{N}  \frac{\left(n^{\frac{1}{2}} k^{\frac{1}{2}+\epsilon} \right)}{k^2}  \int_{z'_{h,k}}^{z''_{h,k}}  e^{  \frac{\pi r}{12k^2}(\ell-1) \mathrm{Re}(z) + \frac{2n \pi }{k^2} \mathrm{Re}(z) } \mathrm{d}z \right),
\end{align*}}
since $\left| e^{-\frac{\pi r}{12}\left(\frac{Q^2}{\ell} -1 \right) \mathrm{Re}(\frac{1}{z}) }\right| \leq 1.$
The above integrand is regular,  so we can choose any path joining $z'_{h,k}$ and $z''_{h,k}$.  We consider the path as the chord from $z'_{h,k}$ to $z''_{h,k}$ and on this chord, we have $0 < \Re(z) \leq \frac{2k^2}{N^2}$, and length of the path of integration is less than $\frac{2\sqrt{2}k}{N}$.  Thus,  using these two bounds, we get
\begin{align} \label{final_evaluation_T2}
T_2 &= \mathcal{O} \left( \sum_{k=1}^{N}  \frac{1}{k^{\frac{3}{2}-\epsilon}} e^{\frac{4\pi }{N^2}  \left( n +\frac{r}{24}(\ell-1)  \right)} \frac{2\sqrt{2}k}{N} \right) \nonumber \\
&=\mathcal{O} \left(\frac{1}{N} e^{\frac{4\pi }{N^2}  \left( n +\frac{r}{24}(\ell-1)  \right)} \sum_{k=1}^{N} \frac{1}{k^{\frac{1}{2}-\epsilon}} \right).   
\end{align}
Now, our next aim is to evaluate $T_1$, the series corresponding to $ Q^2 < \ell $.  Note that $T_1$ will give the main term of our result.  We write 
{\allowdisplaybreaks
\begin{align*}
 F\left(e^{2\pi i \left(\frac{h'}{k}+ \frac{i}{z}\right)}\right) &= \sum_{m=0}^{\left\lfloor{\frac{r}{24} \left( \frac{\ell}{Q^2}-1\right)} \right\rfloor} p^{(r)}(m) e^{2\pi im\left(\frac{h'}{k}+\frac{i}{z}\right)} + \sum_{m= \left\lfloor{\frac{r}{24} \left( \frac{\ell}{Q^2}-1\right)} \right\rfloor +1}^{\infty}  p^{(r)}(m) e^{2\pi im\left(\frac{h'}{k}+\frac{i}{z}\right)} \\
&= \sum_{m=0}^{\left\lfloor{\frac{r}{24} \left( \frac{\ell}{Q^2}-1\right)} \right\rfloor} p^{(r)}(m) e^{2\pi im\left(\frac{h'}{k}+\frac{i}{z}\right)} + \mathcal{O}\left(e^{-2\pi \left( \left\lfloor{\frac{r}{24} \left( \frac{\ell}{Q^2}-1\right)} \right\rfloor +1 \right) \Re\left(\frac{1}{z} \right)} \right).
\end{align*}}
Further,  we can write
\begin{align*}
\frac{1}{F\left(e^{2\pi i Q \left(\frac{h_Q}{k}+ \frac{iQ}{\ell z}\right)}\right)} &= \sum_{s=0}^{\infty} a_r(s)e^{2\pi isQ\left(\frac{h_Q}{k}+\frac{i Q}{\ell z}\right)} \\
&= \sum_{s=0}^{\left\lfloor{\frac{r}{24} \left( \frac{\ell}{Q^2}-1\right)} \right\rfloor} a_r(s) e^{2\pi i s Q\left(\frac{h_Q}{k}+\frac{iQ}{\ell z}\right)} + \sum_{s=\left\lfloor{\frac{r}{24} \left( \frac{\ell}{Q^2}-1\right)} \right\rfloor +1}^{\infty} a_r(s)e^{2\pi i sQ\left(\frac{h_Q}{k}+\frac{iQ}{\ell z}\right)}\\
&= \sum_{s=0}^{\left\lfloor{\frac{r}{24} \left( \frac{\ell}{Q^2}-1\right)} \right\rfloor} a_r(s) e^{2\pi i s Q\left(\frac{h_Q}{k}+\frac{iQ}{\ell z}\right)} + \mathcal{O}\left(e^{\frac{-2 \pi Q^2}{\ell} \left(\left\lfloor{\frac{r}{24} \left( \frac{\ell}{Q^2}-1\right)} \right\rfloor +1 \right) \Re\left(\frac{1}{z} \right)} \right).
\end{align*}
Thus,  combining above two bounds,  we get
{\allowdisplaybreaks
\begin{align}
\frac{F\left(e^{2\pi i \left(\frac{h'}{k}+ \frac{i}{z}\right)}\right)}{F\left(e^{2\pi i Q \left(\frac{h_Q}{k}+ \frac{iQ}{\ell z}\right)}\right)} &= \sum_{m,s=0}^{\left\lfloor{\frac{r}{24} \left( \frac{\ell}{Q^2}-1\right)} \right\rfloor} p^{(r)}(m) a_r(s)e^{\frac{2\pi i}{k}( mh' + s Q h_Q)}e^{\frac{-2\pi}{z}\left(m+\frac{Q^2}{\ell} s \right)} \nonumber \\ 
& + \mathcal{O}\left(e^{\frac{-2 \pi Q^2}{\ell} \left(\left\lfloor{\frac{r}{24} \left( \frac{\ell}{Q^2}-1\right)} \right\rfloor +1 \right) \Re\left(\frac{1}{z} \right)} \right). \label{bound for F in T1}
\end{align}}
Note that 
{\allowdisplaybreaks
\begin{align*}
T_1 &= \sum_{\substack{Q \mid \ell \\  Q^2 < \ell }} \left(\frac{Q}{\ell}\right)^{\frac{r}{2}} \sum_{\substack{0\leq h<k \leq N \\ \gcd(h,k)=1 \\ \gcd(k, \ell)=Q}} \frac{i}{k^2} e^{i\pi r \left(s(h,k) - s\left(\frac{\ell h}{Q},\frac{k}{Q}\right)\right)} e^{-2n\pi i \frac{h}{k}} \\ 
& \times\int_{z'_{h,k}}^{z''_{h,k}}  e^{ \frac{\pi r}{12z}\left(1-\frac{Q^2}{\ell} \right) + \frac{\pi r z}{12k^2}(\ell-1) +2 \pi n \frac{z}{k^2} } 
\frac{ F\left( e^{2\pi i \left(\frac{h'}{k}+ \frac{i}{ z}\right)}\right)}{F\left(e^{2\pi i Q\left(\frac{h_Q}{k}+ \frac{iQ}{\ell z}\right)}\right) }  \mathrm{d}z. 
\end{align*}}
Substituting \eqref{bound for F in T1}  in $T_1$ yields
\begin{align}\label{T1}
T_1 = \mathcal{K}_1 + \mathcal{K}_2,
\end{align}
where
{\allowdisplaybreaks
\begin{align}
\mathcal{K}_1  := \sum_{\substack{Q \mid \ell \\ Q^2<\ell}} \left(\frac{Q}{\ell}\right)^{\frac{r}{2}} & \sum_{\substack{0\leq h<k \leq N \\ \gcd(h,k)=1\\ \gcd(k, \ell)=Q}}  \frac{i}{k^2} e^{i\pi r \left(s(h,k) - s\left(\frac{\ell h}{Q},\frac{k}{Q}\right)\right)-2\pi i n \frac{h}{k}}
 \int_{z'_{h,k}}^{z''_{h,k}}  e^{\left(\frac{\pi r}{12z}\left(1-\frac{Q^2}{\ell} \right) +\frac{\pi r z}{12k^2}(\ell-1) +2 \pi n \frac{z}{k^2} \right)} \nonumber \\
& \times \sum_{m,s=0}^{\left\lfloor{\frac{r}{24} \left( \frac{\ell}{Q^2}-1\right)} \right\rfloor} p^{(r)}(m) a_r(s)e^{\frac{2\pi i}{k}( mh' + s Q h_Q)}e^{\frac{-2\pi}{z}\left(m+\frac{Q^2}{\ell} s \right)}  \mathrm{d}z,  \label{Kappa1}
\end{align}}
and  
\begin{align*}
\mathcal{K}_2 &:=  \sum_{\substack{Q \mid \ell \\  Q^2<\ell}} \left(\frac{Q}{\ell}\right)^{\frac{r}{2}} \sum_{\substack{0\leq h<k \leq N \\ \gcd(h,k)=1\\ \gcd(k, \ell)=Q}} \frac{i}{k^2} e^{i\pi r \left(s(h,k) - s\left(\frac{\ell h}{Q},\frac{k}{Q}\right)\right)-2\pi i n \frac{h}{k}} \nonumber \\
&  \times \int_{z'_{h,k}}^{z''_{h,k}}  e^{\left(\frac{\pi r}{12z}\left(1-\frac{Q^2}{\ell} \right) +\frac{\pi r z}{12k^2}(\ell-1) +2 \pi n \frac{z}{k^2} \right)}  \mathcal{O}\left(e^{\frac{-2 \pi Q^2}{\ell} \left(\left\lfloor{\frac{r}{24} \left( \frac{\ell}{Q^2}-1\right)} \right\rfloor +1 \right) \Re\left(\frac{1}{z} \right)} \right) \mathrm{d}z,
\end{align*}
which eventually simplifies as  an error term upon employing Theorem \ref{bounds for generalized kloosterman sum}.  As before,  we consider  the path of integration to be the chord from $z'_{h,k}$ to $z''_{h,k}$ and on this chord, we have $0 \leq \Re(z) \leq \frac{2k^2}{N^2}$, length of the path of integration is less than $\frac{2\sqrt{2}k}{N}$ and $\Re(1/z) \geq 1$. Using these bounds, the error term becomes
\begin{align} \label{final evaluation K2}
\mathcal{K}_2 = \mathcal{O}\left( \frac{1}{N} e^{\frac{4 \pi}{N^2} \left(n+\frac{r}{24}(\ell-1)\right)} \sum_{k=1}^{N}  \frac{1}{k^{\frac{1}{2}-\epsilon}}   \right).
\end{align}
Now, we are left to evaluate the term $\mathcal{K}_1$ \eqref{Kappa1},  which can be rewritten as
\begin{align}\label{form_of_blr(n)_after_condition_on_sums}
\mathcal{K}_1 &= \sum_{\substack{Q \mid \ell \\  Q^2 < \ell}} \left(\frac{Q}{\ell}\right)^{\frac{r}{2}} \sum_{\substack{k=1\\ \gcd(k, \ell)=Q}}^N \frac{i}{k^2} \sum_{m,s=0}^{\left\lfloor{\frac{r}{24} \left( \frac{\ell}{Q^2}-1\right)} \right\rfloor} p^{(r)}(m) a_r(s)C(n) \nonumber \\
& \times \int_{z'_{h,k}}^{z''_{h,k}}  e^{\frac{2\pi}{z}\left(\frac{r}{24}\left(1-\frac{Q^2}{\ell} \right) - \left( m+\frac{Q^2s}{\ell} \right) \right) + \frac{2 \pi z}{k^2} \left( n+ \frac{r}{24}(\ell-1)\right)} \mathrm{d}z,
\end{align}
where 
\begin{align*}
C(n)  := C_{k,r,\ell,  Q,m,s}(n)  =  \sum_{\substack{h~\mathrm{mod}~k \\ \gcd(h,k)=1}} e^{i\pi r \left(s(h,k) - s\left(\frac{\ell h}{Q},\frac{k}{Q}\right)\right)} e^{ \frac{2\pi i}{k}( mh' + s Q h_Q -nh)}. 
\end{align*}
As we make $\Re(z)$ small, so $\Re\left( \frac{1}{z} \right)$ will become large. Thus, for $\frac{r}{24}\left(1-\frac{Q^2}{\ell} \right) \leq \left( m+\frac{Q^2s}{\ell} \right)$, we get an error term whose evaluation is same as we did for $\mathcal{K}_2$. Thus, we leave the details for the reader. Therefore, we are left with the sum runs over all those $m, s$ for which $\frac{r}{24}\left(1-\frac{Q^2}{\ell} \right) > \left( m+\frac{Q^2s}{\ell} \right)$.   Hence,  the sum $\mathcal{K}_1$ becomes
{\allowdisplaybreaks
\begin{align*}
\mathcal{K}_1 = \sum_{\substack{Q \mid \ell \\  Q^2<\ell}} \left(\frac{Q}{\ell}\right)^{\frac{r}{2}} &  \sum_{\substack{k=1\\ \gcd(k, \ell)=Q}}^N \frac{i}{k^2} \sideset{}{'}\sum_{m,s=0}^{\left\lfloor{\frac{r}{24} \left( \frac{\ell}{Q^2}-1\right)} \right\rfloor} p^{(r)}(m) a_r(s) {C}(n) J_{h,k}^* \\
& + \mathcal{O}\left( \frac{1}{N} e^{\frac{4 \pi}{N^2} \left(n+\frac{r}{24}(\ell-1)\right)} \sum_{k=1}^{N}  \frac{1}{k^{\frac{1}{2}-\epsilon}}   \right).
\end{align*}}
where
\begin{align*}
J_{h,k}^*= \int_{z'_{h,k}}^{z''_{h,k}}  e^{\frac{2\pi}{z}\left(\frac{r}{24}\left(1-\frac{Q^2}{\ell} \right) - \left( m+\frac{Q^2s}{\ell} \right) \right) + \frac{2 \pi z}{k^2} \left( n+ \frac{r}{24}(\ell-1)\right)} \mathrm{d}z,
\end{align*}
and $\sum^{'}$ means the sum is running over all those $m, s$ which satisfy $ m+\frac{Q^2s}{\ell} < \frac{r}{24}\left(1-\frac{Q^2}{\ell}\right)$. Now, our aim is to evaluate the integral $J_{h,k}^{*}$ and to do that, we slightly change the our path of integration. We begin by completing the circle $\left|z-\frac{1}{2}\right| = \frac{1}{2}$ and then we subtract the integral over additional arcs, which are the arcs from $0$ to $z'_{h,k}$ and $z''_{h,k}$ to $0$. Thus, we have 
\begin{align*}
J_{h,k}^{*} = \left(\int_{K^{(-)}} -  \int_{0}^{z'_{h,k}} -  \int_{z''_{h,k}}^{0}\right)   e^{\frac{2\pi}{z}\left(\frac{r}{24}\left(1-\frac{Q^2}{\ell} \right) - \left( m+\frac{Q^2s}{\ell} \right) \right) + \frac{2 \pi z}{k^2} \left( n+ \frac{r}{24}(\ell-1)\right)} \mathrm{d}z,
\end{align*}
where $K^{(-)}$ is the circle  $\left|z-\frac{1}{2}\right| = \frac{1}{2}$ in the negative direction. First, we focus on the arc from $0$ to $z'_{h,k}$,  where $\Re\left(\frac{1}{z}\right) = 1$, $\Re(z) \leq \frac{2k^2}{N^2}$, $|z| < |z'_{h,k}| < \frac{k\sqrt{2}}{N}$ and length of this arc is less than $\frac{k\pi}{\sqrt{2}N}$. This gives 
\begin{align*}
&\left|\int_{0}^{z'_{h,k}}  e^{\frac{2\pi}{z}\left(\frac{r}{24}\left(1-\frac{Q^2}{\ell} \right) - \left( m+\frac{Q^2s}{\ell} \right) \right) + \frac{2 \pi z}{k^2} \left( n+ \frac{r}{24}(\ell-1)\right)} \mathrm{d}z \right|  \\
& \leq e^{2\pi\left(\frac{r}{24}\left(1-\frac{Q^2}{\ell} \right) - \left( m+\frac{Q^2s}{\ell} \right) \right) + \frac{4 \pi}{N^2} \left( n+ \frac{r}{24}(\ell-1)\right)} \frac{k\pi}{\sqrt{2}N}.
\end{align*}
Thus,  we have 
\begin{align*}
\int_{0}^{z'_{h,k}}  e^{\frac{2\pi}{z}\left(\frac{r}{24}\left(1-\frac{Q^2}{\ell} \right) - \left( m+\frac{Q^2s}{\ell} \right) \right) + \frac{2 \pi z}{k^2} \left( n+ \frac{r}{24}(\ell-1)\right)} \mathrm{d}z = \mathcal{O}\left(e^{\frac{4 \pi}{N^2} \left( n+ \frac{r}{24}(\ell-1)\right)} \frac{k}{N}\right).
\end{align*}
The similar argument gives the same bound for integral over $z''_{h,k}$ to $0$.   Utilizing these bounds in \eqref{form_of_blr(n)_after_condition_on_sums} and together with Theorem \ref{bounds for generalized kloosterman sum},  we arrive at
\begin{align}\label{final_evaluation_K1}
& \mathcal{K}_1 = \sum_{\substack{Q \mid \ell \\  Q^2<\ell}} \left(\frac{Q}{\ell}\right)^{\frac{r}{2}} \sum_{\substack{k=1\\ \gcd(k, \ell)=Q}}^N \frac{i}{k^2} \sideset{}{'}\sum_{m,s=0}^{\left\lfloor{\frac{r}{24} \left( \frac{\ell}{Q^2}-1\right)} \right\rfloor} p^{(r)}(m) a_r(s) {C}(n) \\
& \times \int_{K^{(-)}} e^{\frac{2\pi}{z}\left(\frac{r}{24}\left(1-\frac{Q^2}{\ell} \right) - \left( m+\frac{Q^2s}{\ell} \right) \right) + \frac{2 \pi z}{k^2} \left( n+ \frac{r}{24}(\ell-1)\right)} \mathrm{d}z + \mathcal{O}\left(\frac{1}{N}e^{\frac{4 \pi}{N^2} \left( n+ \frac{r}{24}(\ell-1)\right)}  \sum_{k=1}^{N} \frac{1}{k^{\frac{1}{2}-\epsilon}}\right).  \nonumber
\end{align}
Finally, substituting \eqref{final evaluation K2} and \eqref{final_evaluation_K1} in \eqref{T1} and combining  with \eqref{final_evaluation_T2} in \eqref{divide blr(n) in two sum}, we get
\begin{align}
& b_\ell^{(r)}(n) = \sum_{\substack{Q \mid \ell \\  Q^2<\ell}} \left(\frac{Q}{\ell}\right)^{\frac{r}{2}} \sum_{\substack{k=1\\ \gcd(k, \ell)=Q}}^{N} \frac{i}{k^2} \sideset{}{'}\sum_{m,s=0} ^{\left\lfloor{\frac{r}{24} \left( \frac{\ell}{Q^2}-1\right)} \right\rfloor} p^{(r)}(m) a_r(s) {C}(n) \label{initial form of b_l_r_n} \\
& \times \int_{K^{(-)}} e^{\frac{2\pi}{z}\left(\frac{r}{24}\left(1-\frac{Q^2}{\ell} \right) - \left( m+\frac{Q^2s}{\ell} \right) \right) + \frac{2 \pi z}{k^2} \left( n+ \frac{r}{24}(\ell-1)\right)} \mathrm{d}z + \mathcal{O}\left(\frac{1}{N}e^{\frac{4 \pi}{N^2} \left( n+ \frac{r}{24}(\ell-1)\right)}  \sum_{k=1}^{N} \frac{1}{k^{\frac{1}{2}-\epsilon}}\right).   \nonumber
\end{align}
Observe that the  left hand side  is independent of $N$ and  the error term on the right hand side goes to zero as $N \rightarrow \infty $, $\sum_{k=1}^{N} \frac{1}{k^s} \sim N^{1-s}$, for $s>0, s\neq 1$.  
Thus,  letting $N \rightarrow \infty$ on both sides of \eqref{initial form of b_l_r_n},  we have 
\begin{align}\label{intermediate_form_of_blr(n)}
b_\ell^{(r)}(n) &= \sum_{\substack{Q \mid \ell \\  Q^2<\ell}} \left(\frac{Q}{\ell}\right)^{\frac{r}{2}} \sum_{\substack{k=1\\ \gcd(k, \ell)=Q}}^{\infty} \frac{i}{k^2} \sideset{}{'}\sum_{m,s=0} ^{\left\lfloor{\frac{r}{24} \left( \frac{\ell}{Q^2}-1\right)} \right\rfloor} p^{(r)}(m) a_r(s) {C}(n) \nonumber \\
& \times \int_{K^{(-)}} e^{\frac{2\pi}{z}\left(\frac{r}{24}\left(1-\frac{Q^2}{\ell} \right) - \left( m+\frac{Q^2s}{\ell} \right) \right) + \frac{2 \pi z}{k^2} \left( n+ \frac{r}{24}(\ell-1)\right)} \mathrm{d}z .
\end{align}
This essentially gives the exact formula for $b_\ell^{(r)}(n)$. However, we need to check the convergence of the above infinite series.  One can see that
\begin{align*}
\left|\int_{K^{(-)}} e^{\frac{2\pi}{z}\left(\frac{r}{24}\left(1-\frac{Q^2}{\ell} \right) - \left( m+\frac{Q^2s}{\ell} \right) \right) + \frac{2 \pi z}{k^2} \left( n+ \frac{r}{24}(\ell-1)\right)} \mathrm{d}z \right| \leq \pi e^{\frac{2 \pi r}{24}\left(1-\frac{Q^2}{\ell} \right) + \frac{2 \pi}{k^2} \left( n+ \frac{r}{24}(\ell-1)\right)}
\end{align*}
as $\Re \left( \frac{1}{z} \right) = 1$ and $\Re(z) \leq 1$ on the circle $K^{(-)}$. 
Using Theorem \ref{bounds for generalized kloosterman sum} and upon simplification,  one can show that 
\begin{align*}
b_\ell^{(r)}(n) = \mathcal{O}\left(  \sum_{k=1}^{\infty}\frac{1}{k^{\frac{3}{2}-\epsilon}} \right),
\end{align*}
which is absolutely convergent for any small $0<\epsilon <1/2. $ The only task left is to write the integral present in \eqref{intermediate_form_of_blr(n)} in a simplified form.  In \eqref{intermediate_form_of_blr(n)},  we substitute $z = \frac{1}{\omega}$. The image of the circle $\left|z-\frac{1}{2}\right| = \frac{1}{2}$  under this transformation is $\Re(\omega) =1$. Thus,  we have
\begin{align*}
b_\ell^{(r)}(n) &= \sum_{\substack{Q \mid \ell \\  Q^2<\ell}} \left(\frac{Q}{\ell}\right)^{\frac{r}{2}} \sum_{\substack{k=1\\ \gcd(k, \ell)=Q}}^\infty \frac{i}{k^2} \sideset{}{'}\sum_{m,s=0}^{\left\lfloor{\frac{r}{24} \left( \frac{\ell}{Q^2}-1\right)} \right\rfloor} p^{(r)}(m) a_r(s)\mathcal{C}(n) \nonumber \\
& \times \int_{1-i\infty}^{1+i\infty} \frac{-1}{\omega^2} e^{2\pi \omega \left(\frac{r}{24}\left(1-\frac{Q^2}{\ell} \right) - \left( m+\frac{Q^2s}{\ell} \right) \right) + \frac{2 \pi }{\omega k^2} \left( n+ \frac{r}{24}(\ell-1)\right)} \mathrm{d}\omega,
\end{align*} 
Substituting $\left(\frac{\pi r}{12}\left(1-\frac{Q^2}{\ell} \right) - 2 \pi\left( m+\frac{Q^2s}{\ell} \right)\right)\omega  = t$ and comparing it with \eqref{bessel_I_integral}, we get 
\begin{align}
b_\ell^{(r)}(n) &= \sum_{\substack{Q \mid \ell \\  Q^2<\ell}} \left(\frac{Q}{\ell}\right)^{\frac{r}{2}} \sum_{\substack{k=1\\ \gcd(k, \ell)=Q}}^\infty \frac{2\pi}{k^2} \sideset{}{'}\sum_{m,s=0}^{\left\lfloor{\frac{r}{24} \left( \frac{\ell}{Q^2}-1\right)} \right\rfloor} p^{(r)}(m) a_r(s) {C}(n)\sqrt{\frac{\frac{\pi r}{12}\left(1-\frac{Q^2}{\ell} \right) - 2 \pi\left( m+\frac{Q^2s}{\ell} \right)}{ \frac{\pi r}{12k^2}(\ell-1) + \frac{2n\pi}{k^2} }} \nonumber \\
& \times I_1\left( \frac{4 \pi}{k}\sqrt{\left(\frac{r}{24}\left(1-\frac{Q^2}{\ell}\right)- \left( m+ \frac{Q^2s}{\ell} \right) \right) \left( n+\frac{r}{24}(\ell-1)\right)}\right). \label{final_b_l_r in terms of I1}
\end{align}
Further,  utilizing 
\begin{align*}
I_{\nu}(z) = \frac{1}{i^{\nu}}J_{\nu}(iz),  \quad
J_1(z) = -\frac{\mathrm{d}}{\mathrm{d}z}J_0(z),
\end{align*}
we get the desired result \eqref{exact formula for r-colored l-regular}.  
\end{proof}

\begin{proof}[Corollary \ref{asymptotic of r-colored l regular partitions theorem}][]
We first separate the main term $M(n)$ corresponding to $Q=1,  k=1,  m=s=0$ in \eqref{final_b_l_r in terms of I1}  and denote the contribution of the remaining terms by $E(n)$. This yields
\begin{align*}
b_\ell^{(r)}(n) 
&= M(n) + E(n),
\end{align*}
where 
\begin{align}\label{main term in asymptotic}
M(n)= \left(\frac{1}{\ell}\right)^{\frac{r}{2}} 2\pi \sqrt{\frac{\frac{ r}{24}\left(1-\frac{1}{\ell} \right)}{ \frac{ r}{24}(\ell-1) + n }} I_1\left( 4 \pi \sqrt{\frac{r}{24}\left(1-\frac{1}{\ell}\right) \left( n+\frac{r}{24}(\ell-1)\right)}\right). 
\end{align}
Next, we employ the classical asymptotic expansion for the modified Bessel function of the first kind,  
\begin{align*}
I_{\nu}(x) \sim \frac{e^{x}}{\sqrt{2\pi x}} \quad \text{as} \quad x \to \infty. 
\end{align*} 
Making use of  this asymptotic expansion into \eqref{main term in asymptotic} gives
\begin{align*}
M(n) &\sim  \frac{1}{\sqrt{2}}\left(\frac{1}{\ell}\right)^{\frac{r}{2}} \left(\frac{ r}{24}\left(1-\frac{1}{\ell} \right) \right)^{\frac{1}{4}} \left( \frac{1}{n + \frac{r}{24}(\ell-1)}\right)^{\frac{3}{4}} e^{ 4 \pi \sqrt{\frac{r}{24}\left(1-\frac{1}{\ell}\right) \left( n+\frac{r}{24}(\ell-1)\right)} }\\
&\sim \frac{1}{\sqrt{2}}\left(\frac{1}{\ell}\right)^{\frac{r}{2}} \left(\frac{ r}{24}\left(1-\frac{1}{\ell} \right) \right)^{\frac{1}{4}} \left( \frac{1}{n}\right)^{\frac{3}{4}} e^{ 4 \pi \sqrt{\frac{nr}{24}\left(1-\frac{1}{\ell}\right)} }
\end{align*} 
as  $n \to \infty$. 
Using the same asymptotic expansion,  one can see that, for any $k \geq 2$,  the modified Bessel function present in $E(n)$ has the following asymptotic expansion: 
\begin{align*}
 I_1\left( \frac{4 \pi}{k}\sqrt{ \delta \left( n+\frac{r}{24}(\ell-1)\right)}\right) \sim  \frac{ e^{\frac{4\pi}{k} \sqrt{ \delta \left( n+\frac{r}{24}(\ell-1)\right)} }}{ \sqrt{ \frac{8 \pi^2}{k} \sqrt{ \delta \left( n+\frac{r}{24}(\ell-1)\right)}}  } \sim   \frac{ e^{\frac{4\pi}{k} \sqrt{ \delta n} }}{ \sqrt{ \frac{8 \pi^2}{k} \sqrt{ \delta  n }}  },  
\end{align*}
where 
\begin{align*}
\delta = \left(\frac{r}{24}\left(1-\frac{Q^2}{\ell}\right)- \left( m+ \frac{Q^2s}{\ell} \right) \right).  
\end{align*}
Moreover,  one can easily check that,  for any $k \geq 2$,  
\begin{align*}
\lim_{n \rightarrow \infty} \frac{ e^{\frac{4\pi}{k} \sqrt{ \delta n} }}{e^{ 4 \pi \sqrt{\frac{nr}{24}\left(1-\frac{1}{\ell}\right)}}} =0.  
\end{align*}
Here we have used the  fact that $\sqrt{\delta} <  \sqrt{\frac{r}{24}\left(1-\frac{1}{\ell}\right)}$ as $(m, s) \neq (0,0)$.  
This completes the proof of Corollary \ref{asymptotic of r-colored l regular partitions theorem}.
\end{proof}

\begin{proof}[Corollary \ref{l regular partition}][]
Substitute $r=1$ in Theorem \ref{l regular r colour } to obtain the desired result.
\end{proof}

\begin{proof}[Corollary \ref{r_colour_distint_partitions}][]
By specializing Theorem \ref{l regular r colour } to the case $\ell = 2$, we arrive at the desired conclusion.
\end{proof}

\begin{proof}[Corollary \ref{sigma_bar_mex_Exact_formula_corollary}][]
By substituting $r=3$ in Corollary \ref{r_colour_distint_partitions}, we get an exact formula for $\sigma\overline{mex}(n)$. Further, we utilize 
\begin{align*}
I_{\nu}(z) = \frac{1}{i^{\nu}}J_{\nu}(iz),  \quad
J_1(z) = -\frac{\mathrm{d}}{\mathrm{d}z}J_0(z),
\end{align*}
to attain the desired form \eqref{sigma_bar_mex_Exact_formula_equation}.
\end{proof}

\begin{proof}[Theorem \ref{hyperbolicity of Jensen polynomial of r-colored l regular partitions}][]

From \eqref{asymptotic of r-colored l regular partitions}, for any non-negative integer $j$, we have
\begin{align*}
\frac{b_\ell^{(r)}(n+j)}{b_\ell^{(r)}(n)} \sim  \left(\frac{n}{n+j} \right) ^{\frac{3}{4}} e^{ 4 \pi \sqrt{\frac{r(n+j)}{24}\left(1- \frac{1}{\ell}\right)} - 4 \pi \sqrt{\frac{r n}{24}\left(1- \frac{1}{\ell}\right)} }.
\end{align*}
Taking natural logarithm on both sides,   we have
\begin{align*}
\log \left(\frac{b_\ell^{(r)}(n+j)}{b_\ell^{(r)}(n)} \right) \sim 4 \pi \sqrt{\frac{r}{24}\left(1- \frac{1}{\ell}\right)}  \left(\sqrt{n+j} - \sqrt{n} \right) - \frac{3}{4} \log \left( \frac{n+j}{n} \right).
\end{align*}
The above right side expression can be simplified as follows:
{\allowdisplaybreaks
\begin{align*}
& 4 \pi \sqrt{\frac{r}{24}\left(1- \frac{1}{\ell}\right)}\left(\sqrt{n+j} - \sqrt{n} \right) - \frac{3}{4} \log \left( \frac{n+j}{n} \right) \\
& = 4 \pi \sqrt{\frac{r}{24}\left(1- \frac{1}{\ell}\right)}  \sum_{i=1}^\infty {1/2 \choose i} \frac{j^i}{n^{i-1/2}} + \frac{3}{4} \sum_{i=1}^\infty \frac{(-1)^i j^i}{in^i}\\
&= \left( 2 \pi \sqrt{\frac{r}{24n}\left(1- \frac{1}{\ell}\right)} - \frac{3}{4n} \right)j - \left( \frac{\pi}{2} \sqrt{\frac{r}{24n^3}\left(1- \frac{1}{\ell}\right)} - \frac{3}{8n^2}  \right)j^2  \\
& + \sum_{i=3}^\infty \left( 4 \pi \sqrt{\frac{r}{24}\left(1- \frac{1}{\ell}\right)} {1/2 \choose i} \frac{1}{n^{i-1/2}} + \frac{3}{4} \frac{(-1)^i}{in^i} \right) j^i.
\end{align*}}
Now, let $A(n)= 2 \pi \sqrt{\frac{r}{24n}\left(1- \frac{1}{\ell}\right)} - \frac{3}{4n}, ~ \delta(n)=\sqrt{\frac{\pi}{2} \sqrt{\frac{r}{24n^3}\left(1- \frac{1}{\ell}\right)} - \frac{3}{8n^2}}$ and 
$$
g_i(n)= 4 \pi \sqrt{\frac{r}{24}\left(1- \frac{1}{\ell}\right)} {1/2 \choose i} \frac{1}{n^{i-1/2}} + \frac{3}{4} \frac{(-1)^i}{in^i},$$
 for all $i\geq 3$. A simple calculation shows that, for $3\leq i \leq d$, we have
$
\lim_{n \rightarrow \infty} \frac{g_i(n)}{(\delta(n))^i} = 0.
$
We can further check that $\lim_{n\rightarrow \infty}\frac{g_i(n)}{(\delta(n))^d} = 0,$ for all $i\geq d+1$.  Hence,  it follows that the sequences $\{b_\ell^{(r)}(n)\},~\{A(n)\},~\{\delta(n)\}$ and $\{g_i(n)\}$ satisfy the assumptions of Theorem \ref{theorem of Griffin ono rolen and zagier}. Therefore, the Jensen polynomials corresponding to the $r$-colored $\ell$-regular partition function $b_\ell^{(r)}(n)$ admit representations in terms of Hermite polynomials, which are hyperbolic for sufficiently large values of $n$. This concludes the proof of Theorem \ref{hyperbolicity of Jensen polynomial of r-colored l regular partitions}.
\end{proof}

\section{Numerical Verification}\label{Numerical Verification}

To demonstrate the accuracy of our exact formula for $b_\ell^{(r)}(n)$, we perform a numerical verification by evaluating explicit values of $b_\ell^{(r)}(n)$ for fixed $\ell = 3$ and $\ell = 6$. These values are computed using the first five terms of the series given in Theorem \ref{l regular r colour } and are then compared with the corresponding exact values.

\begin{table}[h]

\caption{Verification of Theorem \ref{l regular r colour } for $\ell =3$}\label{Table for 3 regular r colour}
\renewcommand{\arraystretch}{1}
{\begin{tabular}{|l|l|l|l|}
\hline
$r$ & $n$ &  Exact value  & Value from our result  \\ 
\hline
 $	50$&  $7$ &  $420621700$ &$420621699.99999$\\      
\hline
$39$&$10$& $24030437457$ & $24030437457$  \\
\hline
$	63$	&$4$ & $849807$&$849806.999999$ \\
\hline
$76$	& $5$&$30050020$ &$30050019.999999$ \\
\hline
%$39$& $4$&   $ 143169$  & $143169.00000$ \\
%\hline
\end{tabular}}
\end{table}

\begin{table}[h]

\caption{Verification of Theorem \ref{l regular r colour } for $\ell =6$}\label{Table for 6 regular r colour}
\renewcommand{\arraystretch}{1}
{\begin{tabular}{|l|l|l|l|}
\hline
$r$ & $n$ &  Exact value  & Value from our result  \\ 
\hline
 $	75$&  $5$ &  $28462590$ &$28462589.9998$\\      
\hline
$25$&$11$& $3755606050$ & $3755606049.9999$  \\
\hline
$	12$	&$22$ & $299225122470$&$299225122470.0000$ \\
\hline
$22$	& $12$&$5175590618$ &$5175590617.9999$ \\
\hline
%$43$& $9$&   $ 9383947276$  & $9383947276.0001$ \\
%\hline
\end{tabular}}
\end{table}

We also present a verification table for the $r$-colored distinct partitions of $n$, denoted by $p_d^{(r)}(n)$, for various values of $r$ and $n$, and compare the computed results with their exact values.

\begin{table}[h]

\caption{Verification of Corollary \ref{r_colour_distint_partitions} }\label{Table for r colour distinct partitions}
\renewcommand{\arraystretch}{1}
{\begin{tabular}{|l|l|l|l|}
\hline
$r$ & $n$ &  Exact value  & Value from our result  \\ 
\hline
$20$&  $5$ &  $46724$ &$46723.99988778628$\\      
\hline
$25$ & $10$ & $206841715$ & $206841714.9999436$  \\
\hline
$30$	&$7$ & $9603210$ & $9603209.99997784$ \\
\hline
$50$	& $9$ & $12141387350$ & $12141387349.99999$ \\
\hline
$73$ & $6$&   $261788950$  & $261788949.9999971$ \\
\hline
\end{tabular}}
\end{table}

Furthermore, we compute the first six terms of the $25$-colored distinct partitions of $10$ numerically by considering the contributions from the six-term series given in Corollary~\ref{r_colour_distint_partitions}.

\begin{align*}
+~206841714&.5985165 \\
+~0&.412498 \\
-~0&.0120247 \\
+~0&.00107527 \\
-~0&.000487608 \\
-~0&.0000162379 \\
\hline
206841714&.9995613
\end{align*}
The exact value of $p_d^{(25)}(10)$ is $206841715$.  Thus,  the error is $\approx 0.0004$.  

\section*{Acknowledgement}
The first author wishes to thank University Grant Commission (UGC), India, for providing Ph.D. scholarship. The second author's research is funded by the Prime Minister Research Fellowship, Govt. of India, Grant No. 2101705. The last author is grateful to the Anusandhan National Research Foundation (ANRF), India, for giving the Core Research Grant CRG/2023/002122 and MATRICS Grant MTR/2022/000545.  We sincerely thank IIT Indore for providing conductive research environment.

\end{document}